\newtheorem{thm}{Theorem}
\newtheorem{cor}[thm]{Corollary}
\newtheorem{lem}[thm]{Lemma}
\newtheorem{prop}[thm]{Proposition}
\newtheorem{defn}[thm]{Definition}
\theoremstyle{definition}
\newtheorem{rem}{Remark}
\newtheorem{examp}{Example}
\newcommand{\ee}{\varepsilon}
\newcommand{\ccc}{\mathcal{C}}
\newcommand{\fff}{\mathcal{F}}
\newcommand{\con}{\smallfrown}
\newcommand{\incr}{[\ccc]^{2^n}_{\vartriangle}}
\newcommand{\ct}{2^{<\omega}}
\newcommand{\spl}{\mathrm{Spl}}
\begin{document}

\title{On filling families of finite subsets of the Cantor set}
\author{Pandelis Dodos and Vassilis Kanellopoulos}
\address{National Technical University of Athens, Faculty of Applied
Sciences, Department of Mathematics, Zografou Campus, 157 80,
Athens, Greece} \email{pdodos@math.ntua.gr, bkanel@math.ntua.gr}

\footnotetext[1]{2000 \textit{Mathematics Subject Classification}:
03E15, 05D10, 46B15.} \footnotetext[2]{Research supported by a
grant of EPEAEK program ``Pythagoras".}

\maketitle


\begin{abstract}
Let $\ee>0$ and $\fff$ be a family of finite subsets of the Cantor
set $\ccc$. Following D. H. Fremlin, we say that $\fff$ is
$\ee$-filling over $\ccc$ if $\fff$ is hereditary and for every
$F\subseteq\ccc$ finite there exists $G\subseteq F$ such that
$G\in\fff$ and $|G|\geq\ee |F|$. We show that if $\fff$ is
$\ee$-filling over $\ccc$ and $C$-measurable in
$[\ccc]^{<\omega}$, then for every $P\subseteq\ccc$ perfect there
exists $Q\subseteq P$ perfect with $[Q]^{<\omega}\subseteq\fff$. A
similar result for weaker versions of density is also obtained.
\end{abstract}


\section{Introduction}

Let $X$ be a set and $\ee>0$. A family $\fff\subseteq
[X]^{<\omega}$ is said to be $\ee$-filling over $X$ if $\fff$ is
hereditary (i.e. for every $F\in\fff$ and every $G\subseteq F$ we
have $G\in\fff$) and for every $F\in [X]^{<\omega}$ there exists
$G\subseteq F$ with $G\in\fff$ and $|G|\geq \ee |F|$. The notion
of an $\ee$-filling family is due to D. H. Fremlin \cite{F}, who
posed the following problem. For which cardinals $\kappa, \lambda$
we have that whenever $|X|=\kappa$ and $\fff\subseteq
[X]^{<\omega}$ is $\ee$-filling, then there exists $A\subseteq X$
with $|A|=\lambda$ and such that $[A]^{<\omega}\subseteq\fff$? It
is well-known that if $\kappa=\omega$, then $\lambda<\omega$. A
classical example is the Schreier family $\mathcal{S}=\{
F\subseteq \omega: |F|\leq\min F+1\}$. On the other hand, D. H.
Fremlin has shown (\cite{F}, Corollary 6D) that large cardinal
hypotheses imply the consistency of the statement that for every
$\ee$-filling family $\fff$ over $\mathfrak{c}$, there exists
$A\subseteq\mathfrak{c}$ infinite with
$[A]^{<\omega}\subseteq\fff$.

In this paper, we look at the problem when $X$ is the Cantor set
$\ccc=2^\omega$. Notice that $[\ccc]^{<\omega}$ has the structure
of a Polish space, being the direct sum of $[\ccc]^k$ $(k\geq 1)$.
S. A. Argyros, J. Lopez-Abad and S. Todor\v{c}evi\'{c} asked
whether the above mentioned result of Fremlin is valid
without extra set-theoretic assumptions provided that $\fff$ is
reasonably definable. We prove the following theorem which answers
this question positively.
\bigskip

\noindent \textbf{Theorem A.} \textit{Let $\fff$ be an
$\ee$-filling family over $\ccc$. If $\fff$ is $C$-measurable in
$[\ccc]^{<\omega}$, then for every $P\subseteq\ccc$ perfect there
exists $Q\subseteq P$ perfect with $[Q]^{<\omega}\subseteq\fff$. }
\bigskip

\noindent Actually we prove a more general result (Theorem
\ref{t1} in the main text) which implies, for instance, that
Theorem A is valid for an arbitrary $\ee$-filling family in the
Solovay Model.

Our second result concerns weaker versions of density. For every
$\fff\subseteq [\ccc]^{<\omega}$ and every $n\geq 1$ let
$d_{\fff}(n)$ be the density of $\fff$ at $n$, that is
\[ d_{\fff}(n)=\min_{F\in [\ccc]^n} \max\{|G|: G\subseteq F
\text{ and } G\in\fff\}. \]
Notice that $\fff$ is $\ee$-filling if and only if $\fff$ is
hereditary and $\frac{d_\fff(n)}{n}\geq\ee$ for all $n\geq 1$.
Although every $C$-measurable $\ee$-filling family $\fff$ over
$\ccc$ is not compact, Fremlin has shown that for every
$f:\omega\to\omega$ with $n\geq f(n)>0$ for all $n\geq 1$
and $\lim\frac{f(n)}{n}=0$ there exists a compact and hereditary
family $\fff$, closed in  $[\ccc]^{<\omega}$ and such
that $d_\fff(n)\geq f(n)$ for every $n\geq 1$ (see \cite{F},
Proposition 4B). The following theorem shows, however, that
any such family $\fff$ must still be large.
\bigskip

\noindent \textbf{Theorem B.} \textit{Let $\fff\subseteq
[\ccc]^{<\omega}$ hereditary. Assume that $\fff$ has the Baire
property in $[\ccc]^{<\omega}$ and satisfies
\begin{equation*}
(\ast) \ \ \ \ \ \limsup \frac{\log_2 d_\fff(2^n)}{\log_2 n}=+\infty.
\end{equation*}
Then for every $k\geq 1$ there exists $P\subseteq \ccc$ perfect
such that $[P]^k\subseteq\fff$. }
\bigskip

\noindent The proof of Theorem B is based on A. Blass' theorem
\cite{B}. Theorem B has the following consequence which shows that
we can increase the density of $\fff$ by passing to a perfect
subset. In particular, if $\fff$ is $C$-measurable and satisfies
equation $(\ast)$ above, then for every $f:\omega\to\omega$ with
$n\geq f(n)>0$ for all $n\geq 1$ and $\lim \frac{f(n)}{n}=0$ and
every perfect subset $P$ of $\ccc$ there exists $Q\subseteq P$
perfect such that the density of $\fff$ in $Q$ is greater or equal
to $f$. We also include some connections of the above results with
Banach spaces.
\bigskip

\noindent \textbf{Acknowledgments.} We would like to thank Professor
Spiros A. Argyros for bringing the problem to our attention as
well as for suggesting Corollary \ref{c3} and the Banach space
theoretic implications. We also thank Alexander D. Arvanitakis
for many stimulating conversations.

\section{Preliminaries}

We let $\omega=\{0,1,...\}$. The cardinality of a set $A$ is
denoted by $|A|$. By $<$ we denote the (strict) lexicographical
ordering on the Cantor set $\ccc=2^\omega$. If $A, B\subseteq
\ccc$, then we write $A<B$ if for every $x\in A$ and every $y\in
B$, we have $x<y$. For every $n\geq 1$ and every $P\subseteq
\ccc$, by $ [P]^n$ we denote the set of all $<$-increasing
sequences of $P$ of cardinality $n$, while by $[P]^{<\omega}$ the
set of all finite $<$-increasing sequences of $P$.

By $2^{<\omega}$ we denote the Cantor tree, i.e. the set of all
finite sequences of 0's and 1's, equipped with the (strict)
partial ordering $\sqsubset$ of initial segment. If $s,t\in
2^{<\omega}$, then by $s^\con t$ we denote their concatenation.
For every $s\in 2^{<\omega}$, the length $\ell(s)$ of $s$ is
defined to be the cardinality of the set $\{t\in 2^{<\omega}:
t\sqsubset s\}$. For every $n\in\omega$, by $2^n$ we denote the
set of all sequences in $2^{<\omega}$ of length $n$, while for
every $n\geq 1$ by $2^{<n}$ we denote the set of all sequences of
length less than $n$. For every $s,t\in 2^{<\omega}$ we denote by
$s\wedge t$ the $\sqsubset$-maximal node $w$ such that
$w\sqsubseteq s$ and $w\sqsubseteq t$. Similarly, if $x,y\in\ccc$,
then by $x\wedge y$ we denote the $\sqsubset$-maximal node $t$ of
$\ct$ with $t\sqsubset x$ and $t\sqsubset y$. We write $s\prec t$
if $w^\con 0\sqsubseteq s$ and $w^\con 1\sqsubseteq t$, where
$w=s\wedge t$.

We view every subset of $\ct$ as a \textit{subtree} of $\ct$
equipped with the induced partial ordering. For every $m\in\omega$
and every subtree $T$ of $\ct$ by $T(m)$ we denote the
$m$-\textit{level} of $T$, that is the set of all $t\in T$ such
that $|\{s\in T: s\sqsubset t\}|=m$. A node $t\in T$ is said to be
a \textit{splitting node} of $T$ if $t$ has at least two immediate
successors in $T$. By $\spl(T)$ we denote the set of splitting
nodes of $T$.

A subtree $T$ of $\ct$ is said to be \textit{downwards closed} if
for every $t\in T$ the set $\{s: s\sqsubseteq t\}$ is a subset of
$T$. Notice that if $T$ is a downwards closed subtree and
$m\in\omega$, then $T(m)=\{t\in T: t\in 2^m\}$. The body $[T]$ of
$T$ is the set $\{ x\in\ccc: x|n\in T \ \forall n\in\omega\}$,
where $x|n=(x_0,...,x_{n-1})\in 2^{<\omega}$ if $n\geq 1$ and
$x|0=(\varnothing)$ if $n=0$. If $t\in T$, then we set
$[T]_t=\{x\in [T]:t\sqsubset x\}$. In particular, for every
$t\in\ct$ we have $\ccc_t=\{ x\in \ccc: t\sqsubset x\}$.

If $A\subseteq\ct$, then the \textit{downwards closure} $\hat{A}$
of $A$ is the set $\{s\in\ct:\exists t\in A \text{ with }
s\sqsubseteq t\}$. Moreover, for every $F\subseteq\ccc$ we let
$T_F=\{ x|n: x\in F, n\in\omega\}$. Observe that $F$ is closed if
and only if $F=[T_F]$. It is easy to see that if $F$ is a finite
subset of $\ccc$, then $|\spl(T_F)|=|F|-1$. Similarly if $A$ is a
finite antichain of $\ct$, then $|\spl(\hat{A})|=|A|-1$.

A subtree $T$ of $\ct$ is said to be \textit{pruned} if for every
$t\in T$ there exists $s\in T$ with $t\sqsubset s$. It is said to
be \textit{skew} if for every $m\in\omega$ we have $|T(m)\cap
\spl(T)|\leq 1$.

Let us recall the notion of the \textit{type} $\tau$ of a
downwards closed, pruned, skew subtree $T$ of $\ct$ taken from
Louveau-Shelah-Veli\v{c}kovi\'{c} \cite{LSV}. We will only treat
trees $T$ with $[T]$ finite. So, let $k\geq 2$ and let $T$ be a
downwards closed, pruned, skew subtree of $\ct$ such that $[T]$
has $k$ elements. The type of $T$ is a function $\tau:\{ 1,...,
k-1\}\to\omega$, where $\tau(n)$ is defined as follows. For every
$n\in \{1,..., k-1\}$ let $m\in\omega$ be the least such that
$T(m)$ has $n+1$ nodes. Let $T(m-1)=\{ s_0\prec...\prec
s_{n-1}\}$. Then $\tau(n)=d$, if $s_d$ is the unique splitting
node of $T(m-1)$. Every type of a tree $T$ with $[T]=k$ will be
called a $k$-type. It is easy to see that for every $k\geq 2$
there exist $(k-1)!$ $k$-types. We remark that the above
definition is equivalent to the initial one, given by A. Blass
\cite{B}. If $F$ is a finite subset of $\ccc$, then we say that
$F$ is of type $\tau$ if $T_F$ is skew and of type $\tau$. If
$P\subseteq \ccc$ and $\tau$ is a $k$-type, then by $[P]^k_{\tau}$
we denote the set of all subsets of $P$ of type $\tau$.

We will also treat the following class of subtrees of $\ct$ which
are not downwards closed. A subtree $T$ of $\ct$ is said to be
\textit{regular dyadic} if $T$ can be written in the form
$T=(t_s)_{s\in 2^{<\omega}}$ such that for all $s_1, s_2\in\ct$
the following are satisfied.
\begin{enumerate}
\item[(1)] $s_1\sqsubset s_2$ (respectively $s_1\prec s_2$) if and
only if $t_{s_1}\sqsubset t_{s_2}$ (respectively $t_{s_1}\prec
t_{s_2}$). \item[(2)] $\ell(s_1)=\ell(s_2)$ if and only if
$\ell(t_{s_1})=\ell(t_{s_2})$.
\end{enumerate}
It is easy to see that the representation of $T$ as
$(t_s)_{s\in\ct}$ is unique. In what follows when we deal with a
regular dyadic subtree $T$ we will always use this unique
representation. We also notice that if $T$ is a regular dyadic
tree, then $[\hat{T}]$ is a perfect subset of $\ccc$ homeomorphic
to $\ccc$.

Finally, we recall that a subset $A$ of an uncountable Polish
space $X$ is $C$-\textit{measurable} if it belongs to the smallest
$\sigma$-algebra which is closed under the Souslin operation and
contains the open sets. We remark that the class of $C$-measurable
sets is strictly bigger than the $\sigma$-algebra generated by the
analytic sets (see \cite{Kechris}).


\section{Definable $\ee$-filling families}

We start with the following definition.
\begin{defn}
Let $\fff\subseteq[\ccc]^{<\omega}$. The family $\fff$ is said to
have the Galvin property if for every $n\in\omega$ and every $P_0<
...< P_n$ perfect subsets of $\ccc$, there exist $Q_0,...,Q_n$
such that the following hold.
\begin{enumerate}
\item[(1)] For all $i=0,...,n$, $Q_i$ is a perfect subset of
$P_i$. \item[(2)] Either $Q_0\times ...\times Q_n \subseteq\fff$
or $(Q_0\times ...\times Q_n) \cap \fff =\varnothing$.
\end{enumerate}
\end{defn}
We notice that if for every $n\in\omega$ and every $P_0< ...< P_n$
perfect subsets of $\ccc$ the set $\fff\cap (P_0\times ...\times
P_n)$ has the Baire property in $P_0\times ...\times P_n$, then
the family $\fff$ has the Galvin property. This is a consequence
of a theorem of F. Galvin (see \cite{Kechris}, Theorem 19.6).
Under the above terminology we have the following.
\begin{thm}
\label{t1} Let $\ee>0$ and $\fff$ be an $\ee$-filling family over
$\ccc$. If $\fff$ has the Galvin property, then for every perfect
subset $P$ of $\ccc$ there exists $Q\subseteq P$ perfect such
that $[Q]^{<\omega}\subseteq \fff$.
\end{thm}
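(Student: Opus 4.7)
I would prove the theorem by constructing, via fusion, a regular dyadic subtree $T = (t_s)_{s \in \ct}$ of $\ct$ inside $P$, so that $Q := [\hat{T}]$ is the desired perfect subset. Since any finite increasing sequence $x_1 < \cdots < x_k$ in $[\hat T]$ is separated at some level $n$ into distinct cones $\ccc_{t_{s_1}},\dots,\ccc_{t_{s_k}}$ for a suitable $s_1 \prec \cdots \prec s_k$ in $2^n$, the requirement $[Q]^{<\omega} \subseteq \fff$ is equivalent to asking that, for every $n$ and every nonempty $I \subseteq 2^n$, every transversal of the cones $(Q \cap \ccc_{t_s})_{s \in I}$ belongs to $\fff$.

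At stage $n$ I would commit $t_s$ for $s \in 2^{\leq n}$ and carry perfect reservoirs $Q_s \subseteq \ccc_{t_s} \cap P$ for $s \in 2^n$, with the invariant that every transversal of $(Q_s)_{s \in I}$ lies in $\fff$ for each nonempty $I \subseteq 2^n$. Since passing to perfect subsets of the $Q_s$'s preserves this condition, any transversal at stage $n+1$ whose supports have pairwise distinct parents in $2^n$ is automatically in $\fff$; thus the step from $n$ to $n+1$ reduces to arranging, in addition, that transversals whose support contains some sibling pair $\{s^\con 0, s^\con 1\}$ also lie in $\fff$. The combinatorial heart of the fusion step is the auxiliary statement: \emph{for every perfect $R_0 < \cdots < R_{N-1}$ in $\ccc$ and every $k \geq 1$, there exist perfect $R'_i \subseteq R_i$ with $[R'_0 \cup \cdots \cup R'_{N-1}]^k \subseteq \fff$.} Applied to $(Q_s)_{s \in 2^n}$ with $k = 2^{n+1}$, hereditariness of $\fff$ then lets one split each $R'_s$ into two perfect pieces inside appropriate new cones $\ccc_{t_{s^\con 0}}$, $\ccc_{t_{s^\con 1}}$ to produce the stage $n+1$ data.

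To prove the auxiliary, I would iterate the Galvin property in the spirit of Blass and Louveau-Shelah-Veli\v{c}kovi\'c: for each distribution $(k_0,\ldots,k_{N-1})$ with $\sum k_i = k$, split each $R_i$ into $k_i$ perfect pieces in order and apply the Galvin property to the resulting product to obtain a refinement on which that product is homogeneously in or out of $\fff$. If some distribution were ``out'', one would choose a structured finite $F$ inside the refined union, apply $\ee$-filling to obtain $G \subseteq F$ with $|G| \geq \ee |F|$ and $G \in \fff$; hereditariness forces every $k$-subset of $G$ into $\fff$, and a density-Ramsey lemma for finite subsets of binary trees---guaranteeing that any sufficiently dense $G$ realizes the prescribed distribution---yields the contradiction. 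The principal obstacle is exactly this density-Ramsey ingredient: converting the local $\ee$-filling density into the global statement that \emph{every} distribution be coloured ``in'', not merely some. A secondary, standard point is verifying that the fusion can be executed with a bookkeeping that preserves perfectness of the reservoirs and the regular dyadic cone structure, so that the limit $Q = [\hat T]$ is perfect in $P$.
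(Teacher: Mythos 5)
Your first half --- the fusion via the Galvin property producing a regular dyadic tree on which every product of cones $[\hat{T}]_{t_{s_0}}\times\cdots\times[\hat{T}]_{t_{s_d}}$ is homogeneous for $\fff$ --- is exactly Lemma \ref{l1} of the paper. The gap is in the second half: your auxiliary statement is false, and the invariant you want the fusion to maintain is not achievable. Take $\fff=[\ccc_{(0)}]^{<\omega}\cup[\ccc_{(1)}]^{<\omega}$. This family is hereditary, clopen in $[\ccc]^{<\omega}$ (so it has the Galvin property) and $1/2$-filling, yet for perfect $R_0\subseteq\ccc_{(0)}$ and $R_1\subseteq\ccc_{(1)}$ no perfect $R_0'\subseteq R_0$, $R_1'\subseteq R_1$ can satisfy $[R_0'\cup R_1']^2\subseteq\fff$: a pair with one point in each half is never in $\fff$. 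The same example shows your stage-$n$ invariant (every transversal of all $2^n$ reservoirs lies in $\fff$) cannot in general be propagated by splitting each reservoir in two. The density--Ramsey ingredient you flag as the principal obstacle is likewise unobtainable: if a distribution requires $G$ to meet $m$ pairwise disjoint pieces of $F$ and $m\geq 1/(1-\ee)$, then some piece contains at most $(1-\ee)|F|$ points of $F$, so an $\ee$-dense $G\subseteq F$ can avoid it entirely; no choice of a ``structured'' $F$ repairs this, and $m=2^{n+1}$ grows without bound in your scheme. In short, $\ee$-filling controls how \emph{many} points of $F$ survive into $G$, never \emph{where} they sit, so you cannot force all distributions to be coloured ``in''.

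What the argument actually needs, and what the paper does after constructing the deciding tree, is to give up on making all of $[\hat{T}]$ good. The filling property only yields, at each level $n$, \emph{some} trapped set $F_n\subseteq 2^n$ with $|F_n|\geq\ee\cdot 2^n$ (Lemma \ref{l2}), and trappedness is inherited by dominated sets at lower levels. One then forms the clopen sets $C_n=\bigcup_{s\in F_n}[\hat{T}]_{t_s}$, each of canonical measure $\mu_T(C_n)\geq\ee$, extracts a Vietoris-convergent subsequence $C_n\to K$, and uses upper semicontinuity of $\mu_T$ on the hyperspace to get $\mu_T(K)\geq\ee$; since $\mu_T$ is continuous, $K$ is uncountable and contains a perfect set, while every finite subset of $K$ is eventually dominated by the $F_n$ and hence lies in $\fff$ (Lemma \ref{l3}). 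This limiting, measure-theoretic step is the missing idea; without it the purely local fusion cannot decide, at each branching, which cones to keep.
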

For the proof of Theorem \ref{t1} we need the following
definition.
\begin{defn}
\label{d1} Let $\fff\subseteq[\ccc]^{<\omega}$ and $T=(t_s)_{s\in
2^{<\omega}}$ be a regular dyadic subtree of $\ct$. We say that
the tree $T$ decides for $\fff$ if for every $n\in\omega$, every
$0\leq d\leq 2^n-1$ and every $F=\{s_0\prec ...\prec
s_d\}\subseteq 2^n$ we have that the product
$[\hat{T}]_{t_{s_0}}\times ... \times [\hat{T}]_{t_{s_d}}$ either
is included in or is disjoint from $\fff$. In the case where
$[\hat{T}]_{t_{s_0}}\times ... \times [\hat{T}]_{t_{s_d}}$ is
included in $\fff$, then we say that $F$ is trapped in $\fff$.
\end{defn}
The following lemma is the combinatorial part of the proof of
Theorem \ref{t1}.
\begin{lem}
\label{l1} Let $\fff\subseteq [\ccc]^{<\omega}$ with the Galvin
property and $P$ be a perfect subset of $\ccc$. Then there exists
a regular dyadic tree $T=(t_s)_{s\in 2^{<\omega}}$ that decides
for $\fff$ and $[\hat{T}]\subseteq P$.
\end{lem}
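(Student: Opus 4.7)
The plan is to construct the regular dyadic tree $T=(t_s)_{s\in\ct}$ together with an auxiliary family of perfect subsets $P_s\subseteq P$ by a fusion argument carried out level by level, recording the decisions for $\fff$ as we go.

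At stage $n$ we will have, for every $s\in 2^n$, a node $t_s\in\ct$ and a perfect set $P_s\subseteq P\cap\ccc_{t_s}$, subject to the following invariants: all the nodes $t_s$ with $s\in 2^n$ share a common length $\ell_n\geq n$ (so the diameter of $P_s$ is at most $2^{-n}$); the assignment $s\mapsto t_s$ preserves both $\sqsubseteq$ and $\prec$, so conditions (1) and (2) of a regular dyadic tree are in force on what has been built so far; $P_{s'}\subseteq P_s$ whenever $s\sqsubseteq s'$; $P_{s_1}<P_{s_2}$ whenever $s_1\prec s_2$ lie in $2^n$; and, crucially, for every nonempty $\prec$-increasing $F=\{s_0\prec\ldots\prec s_d\}\subseteq 2^n$, the product $P_{s_0}\times\ldots\times P_{s_d}$ is either contained in $\fff$ or disjoint from $\fff$.

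To pass from stage $n$ to stage $n+1$, I would first, for each $s\in 2^n$, split $P_s$ into two disjoint perfect pieces lying inside cylinders $\ccc_{u_s^0}$ and $\ccc_{u_s^1}$ with $t_s\sqsubset u_s^0\prec u_s^1$; replacing these by sufficiently long extensions one may assume that all the $u_s^0,u_s^1$ share a common length (at least $n+1$) as $s$ ranges over $2^n$. This produces $2^{n+1}$ candidate perfect sets indexed by $2^{n+1}$. I would then enumerate the finitely many nonempty $\prec$-increasing subsets $F$ of $2^{n+1}$ and, for each such $F$ in turn, invoke the Galvin property of $\fff$ on the corresponding tuple of perfect sets, so as to decide it at the cost of passing to perfect subsets in each coordinate. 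The key point is that both alternatives (containment in $\fff$ and disjointness from $\fff$) are preserved under further passage to perfect subsets of the individual coordinates; hence the decisions reached for earlier choices of $F$ remain valid as we refine for the subsequent ones. After exhausting these finitely many subsets, the surviving perfect sets become the $P_{s^\con 0}$ and $P_{s^\con 1}$, and we take $t_{s^\con i}$ to be $u_s^i$, further extended to a common length if necessary so that the uniformity across $2^{n+1}$ is restored and the inclusion $P_{s^\con i}\subseteq\ccc_{t_{s^\con i}}$ is preserved.

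Carrying out the induction and assembling the pieces yields $T=(t_s)_{s\in\ct}$. The length-uniformity and order-preservation invariants ensure that $T$ is a regular dyadic subtree of $\ct$, while the diameter condition gives $[\hat T]=\bigcap_n\bigcup_{s\in 2^n}P_s\subseteq P$. For each $s\in 2^n$ one then has $[\hat T]_{t_s}\subseteq P_s$, so every product $[\hat T]_{t_{s_0}}\times\ldots\times[\hat T]_{t_{s_d}}$ is a subproduct of $P_{s_0}\times\ldots\times P_{s_d}$ and inherits the latter's decision with respect to $\fff$; hence $T$ decides for $\fff$. The main technical obstacle lies in the bookkeeping inside the inductive step: one must simultaneously preserve the regular dyadic structure (particularly the common-length constraint across $2^{n+1}$) and iterate the Galvin property through all $\prec$-increasing subsets of $2^{n+1}$ without invalidating the decisions collected earlier.
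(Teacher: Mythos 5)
Your proposal is correct and follows essentially the same route as the paper: a level-by-level fusion building nodes $t_s$ of common length together with nested perfect sets $P_s\subseteq P\cap\ccc_{t_s}$, deciding all subsets of $2^{n+1}$ by finitely many applications of the Galvin property (noting that decisions persist under passage to further perfect subsets), and concluding via $[\hat T]_{t_s}\subseteq P_s$. No gaps.
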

\begin{proof}
By recursion on the length of $s\in 2^{<\omega}$ we will build a
regular dyadic tree $T=(t_s)_{s\in 2^{<\omega}}$ and a family
$(P^s)_{s\in 2^{<\omega}}$ of subsets of $\ccc$ such that for all
$n\in\omega$ the following are satisfied.
\begin{enumerate}
\item[(1)] For all $s\in 2^n$, $P^s$ is a perfect subset of $P$.
\item[(2)] If $n\geq 1$, then for all $s\in 2^{n-1}$ and
$i\in\{0,1\}$, $P^{s^\con i}\subseteq P^s \cap \ccc_{t_{s^\con
i}}$. \item[(3)] For every $0\leq d\leq 2^n-1$ and every
$\{s_0\prec ...\prec s_d\}\subseteq 2^n$ we have that
$P^{s_0}\times ... \times P^{s_d}$ either is included in or is
disjoint from $\fff$.
\end{enumerate}
We start the construction. For $n=0$, we set
$t_\varnothing=\varnothing$. By the Galvin property of $\fff$,
there exists $P^\varnothing\subseteq P$ perfect such that either
$[P^\varnothing]^1 \subseteq\fff$ or
$[P^\varnothing]^1\cap\fff=\varnothing$. Then (1) and (3) are
satisfied. Now assume that for some $n\in\omega$, $(t_s)_{s\in
2^n}$ and $(P^s)_{s\in 2^n}$ have been constructed. As the family
$\{P^s:s\in 2^n\}$ consists of perfect subsets of $P$ and
$P^s\subseteq \ccc_{t_s}$, we may select a sequence $(t_s)_{s\in
2^{n+1}}$ such that the following are satisfied.
\begin{enumerate}
\item[(i)] For all $s_1, s_2\in 2^{n+1}$,
$\ell(t_{s_1})=\ell(t_{s_2})$. \item[(ii)] For every $s\in 2^n$,
the nodes $t_{s^\con 0}$ and $t_{s^\con 1}$ are successors of
$t_s$ and $t_{s^\con 0}\prec t_{s^\con 1}$. \item[(iii)] For all
$s\in 2^{n}$ and $i\in\{0,1\}$, setting $Q^{s^\con i}=P^s\cap
\ccc_{t_{s^\con i}}$ we have that $Q^{s^\con i}$ is a perfect
subset of $P^s$.
\end{enumerate}
Using the fact that $\fff$ has the Galvin property, by an
exhaustion argument over all subsets of $2^{n+1}$, we find for all
$s\in 2^{n+1}$ a perfect set $P^s\subseteq Q^s$ such that
condition (3) is satisfied. This completes the recursive
construction.

We will check that $T=(t_s)_{s\in 2^{<\omega}}$ satisfies all the
desired properties. First we observe that $[\hat{T}]\subseteq P$
is an immediate consequence of (1), (2) and the fact that
$P^\varnothing\subseteq P$. By (2) we also have that
$[\hat{T}]_{t_s}\subseteq P^{s}$ for all $s\in 2^{<\omega}$.
Hence, by (3) we get that $T$ decides for $\fff$, as desired.
\end{proof}
\begin{lem}
\label{l2} Let $\fff\subseteq[\ccc]^{<\omega}$ and $T=(t_s)_{s\in
2^{<\omega}}$ a regular dyadic tree that decides for $\fff$.
Assume that $\fff$ is $\ee$-filling for some $\ee>0$. Then the
following hold.
\begin{enumerate}
\item[(1)] For every $n\in\omega$, there exists $F_n\subseteq 2^n$
with $|F_n|\geq \ee \cdot 2^n$ and such that $F_n$ is trapped in
$\fff$. \item[(2)] Let $n,k\in\omega$ with $k\leq n$, $F\subseteq
2^n$ and $G\subseteq 2^k$ such that $G$ is dominated by $F$ (i.e.
for every $w\in G$ there exists $s\in F$ with $w\sqsubseteq s$).
If $F$ is trapped in $\fff$, then so does $G$.
\end{enumerate}
\end{lem}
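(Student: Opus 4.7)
The plan is to prove both clauses by reducing each to the exhibition of a single tuple that lies in $\fff$ and then invoking the deciding property of $T$, which promotes pointwise membership to containment of an entire product.

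For clause (1), I would fix $n$ and pick an arbitrary representative $x_s\in[\hat{T}]_{t_s}$ for each $s\in 2^n$. Since the regular dyadic correspondence $s\mapsto t_s$ preserves $\prec$ (property (1) of Definition of regular dyadic) and each $x_s$ extends $t_s$, distinct $s_1\prec s_2$ in $2^n$ yield $t_{s_1}\prec t_{s_2}$ and therefore $x_{s_1}<x_{s_2}$ in the lexicographic order of $\ccc$. Hence $X:=\{x_s:s\in 2^n\}$ is an $<$-increasing subset of $\ccc$ of cardinality $2^n$. Applying the $\ee$-filling hypothesis to $X$ produces $Y\subseteq X$ with $Y\in\fff$ and $|Y|\geq\ee\cdot 2^n$. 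Put $F_n:=\{s\in 2^n:x_s\in Y\}$ and enumerate $F_n=\{s_0\prec\dots\prec s_d\}$. Then the tuple $(x_{s_0},\dots,x_{s_d})$ lies in $[\hat{T}]_{t_{s_0}}\times\dots\times[\hat{T}]_{t_{s_d}}$ and $\{x_{s_0},\dots,x_{s_d}\}=Y\in\fff$, so because $T$ decides for $\fff$ the entire product must be inside $\fff$; that is, $F_n$ is trapped.

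For clause (2), write $F=\{s_0\prec\dots\prec s_d\}\subseteq 2^n$ and $G=\{w_0\prec\dots\prec w_e\}\subseteq 2^k$ with $G$ dominated by $F$. For each $i\in\{0,\dots,e\}$ choose $\sigma(i)\in\{0,\dots,d\}$ with $w_i\sqsubseteq s_{\sigma(i)}$. The key observation is that $\sigma$ is injective: elements of the single level $2^k$ are pairwise $\sqsubseteq$-incomparable, so no two of them can be initial segments of the same $s_j$. Now fix any $(u_0,\dots,u_d)\in[\hat{T}]_{t_{s_0}}\times\dots\times[\hat{T}]_{t_{s_d}}$; since $F$ is trapped, $\{u_0,\dots,u_d\}\in\fff$. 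Define $z_i:=u_{\sigma(i)}$. From $w_i\sqsubseteq s_{\sigma(i)}$ and property (1) of regular dyadic trees one gets $t_{w_i}\sqsubseteq t_{s_{\sigma(i)}}$, hence $z_i\in[\hat{T}]_{t_{s_{\sigma(i)}}}\subseteq[\hat{T}]_{t_{w_i}}$. Injectivity of $\sigma$ ensures that $\{z_0,\dots,z_e\}$ is an $(e+1)$-subset of $\{u_0,\dots,u_d\}\in\fff$, and hereditariness of the $\ee$-filling family $\fff$ then gives $\{z_0,\dots,z_e\}\in\fff$. Thus $(z_0,\dots,z_e)$ is a point of $[\hat{T}]_{t_{w_0}}\times\dots\times[\hat{T}]_{t_{w_e}}$ that lies in $\fff$; the deciding property once more forces the full product into $\fff$, so $G$ is trapped.

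The only genuinely delicate point is the injectivity observation in (2): without it one could not be sure that the transported tuple $(z_0,\dots,z_e)$ has $e+1$ distinct components, and the hereditariness step would collapse. Everything else is bookkeeping driven by the order-preservation of the correspondence $s\mapsto t_s$ coupled with the hypothesis that $T$ decides for $\fff$.
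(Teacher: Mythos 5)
Your proposal is correct and follows essentially the same route as the paper: in (1) you pick representatives $x_s\in[\hat{T}]_{t_{s}}$, apply the $\ee$-filling property to extract $F_n$, and use the deciding property to promote a single tuple in $\fff$ to trapping; in (2) your injective selection $\sigma$ is exactly the paper's choice of a subset $F'\subseteq F$ with a unique $s\in F'$ above each $w\in G$, combined with hereditariness. The injectivity point you flag as delicate is indeed the one the paper also makes explicit.
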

\begin{proof}
(1) For every $s\in 2^n$, pick $x_s\in [\hat{T}]_{t_s}$. As $\fff$
is $\ee$-filling, there exists $F_n=\{s_0\prec ... \prec
s_{d-1}\}\subseteq 2^n$ with $d\geq \ee\cdot 2^n$ and such that
$\{x_s:s\in F_n\}\subseteq \fff$. It follows that
$([\hat{T}]_{t_{s_0}}\times ... \times [\hat{T}]_{t_{s_{d-1}}})
\cap \fff\neq\varnothing$. Since the tree $T$ decides for $\fff$,
we conclude that $F_n$ is trapped in $\fff$.\\
(2) First we notice that if $F$ is trapped in $\fff$, then every
subset of $F$ is also trapped in $\fff$, as $\fff$ is hereditary.
Now let $G$ be dominated by $F$. There exists $F'$ subset of $F$
with $|F'|=|G|$ and such that for every $w\in G$ there exists a
unique $s\in F'$ with $w\sqsubseteq s$. Arguing as in (1) above we
get that $G$ is trapped in $\fff$, as desired.
\end{proof}
For every regular dyadic tree $T=(t_s)_{s\in 2^{<\omega}}$ we
define a canonical Borel probability measure $\mu_T$ on
$[\hat{T}]$ by assigning to every $[\hat{T}]_{t_s}$, with $s\in
2^n$ and $n\in\omega$, measure equal to $\frac{1}{2^n}$. That is,
$\mu_T$ is the image of the usual measure on $\ccc$ induced by
the natural homeomorphism between $\ccc$ and $[\hat{T}]$. We
remark that $\mu_T$ is continuous (i.e. it vanishes on singletons)
and regular. The final lemma consists of the analytic part of the
argument.
\begin{lem}
\label{l3} Let $\fff\subseteq[\ccc]^{<\omega}$ and $T=(t_s)_{s\in
2^{<\omega}}$ a regular dyadic tree that decides for $\fff$.
Assume that $\fff$ is $\ee$-filling for some $\ee>0$. Then there
exists $K\subseteq [\hat{T}]$ closed such that $\mu_T(K)\geq\ee$
and $[K]^{<\omega} \subseteq \fff$.
\end{lem}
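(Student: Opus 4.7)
The plan is to extract from the trapped sets provided by Lemma \ref{l2}(1) a coherent decreasing tower of trapped level sets, and then to take $K$ to be the intersection of the clopen sets in $[\hat{T}]$ corresponding to the levels of that tower.

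First, apply Lemma \ref{l2}(1) to choose, for every $N\in\omega$, a set $F_N\subseteq 2^N$ that is trapped in $\fff$ and satisfies $|F_N|\geq \ee\cdot 2^N$. For $n\leq N$ let $G_n^N\subseteq 2^n$ be the set of length-$n$ initial segments of the elements of $F_N$; then $F_N$ dominates $G_n^N$, so by Lemma \ref{l2}(2) the set $G_n^N$ is again trapped in $\fff$, and since each element of $2^n$ has exactly $2^{N-n}$ extensions in $2^N$ we obtain $|G_n^N|\geq \ee\cdot 2^n$. For each fixed $n$ the sequence $(G_n^N)_{N\geq n}$ ranges over the finite set $\mathcal{P}(2^n)$, so a standard diagonal argument (refine to stabilize $G_1^{N_k}$, then $G_2^{N_k}$, and so on) yields a subsequence $(N_k)$ of $\omega$ and sets $G^{(n)}\subseteq 2^n$ with $G_n^{N_k}=G^{(n)}$ for every $n$ and all sufficiently large $k$. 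Each $G^{(n)}$ is then trapped with $|G^{(n)}|\geq \ee\cdot 2^n$, and the tower is coherent: the restriction map $2^n\to 2^{n-1}$ sends $G^{(n)}$ onto $G^{(n-1)}$, since both come from the same $F_{N_k}$ for sufficiently large $k$.

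Next, set $K_n=\bigcup_{s\in G^{(n)}}[\hat{T}]_{t_{s}}$. Coherence gives $K_{n+1}\subseteq K_n$, and by the definition of $\mu_T$ we have $\mu_T(K_n)=|G^{(n)}|/2^n\geq\ee$. Hence $K:=\bigcap_n K_n$ is closed with $\mu_T(K)\geq\ee$. To check $[K]^{<\omega}\subseteq\fff$, take $x_1<\cdots<x_d$ in $K$ and choose $n$ so large that the unique $s_i\in 2^n$ with $t_{s_i}\sqsubset x_i$ are pairwise distinct; since $x_i\in K_n$, each $s_i$ lies in $G^{(n)}$. Applying Lemma \ref{l2}(2) with $k=n$, $F=G^{(n)}$, and the evidently dominated subset $\{s_1,\dots,s_d\}\subseteq G^{(n)}$ shows that $\{s_1,\dots,s_d\}$ is trapped. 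Therefore $[\hat{T}]_{t_{s_1}}\times\cdots\times [\hat{T}]_{t_{s_d}}\subseteq\fff$, and in particular $\{x_1,\dots,x_d\}\in\fff$.

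The point I expect to require care is the simultaneity issue. Taking $K$ naively as the measure-theoretic $\limsup$ of $\bigcup_{s\in F_n}[\hat{T}]_{t_s}$ yields a $G_\delta$ set of measure at least $\ee$, but for a chosen finite $\{x_1,\dots,x_d\}\subseteq K$ there need not be any single $n$ that simultaneously traps all the $x_i$, so the product-in-$\fff$ conclusion fails to follow. The coherent tower extracted by the diagonalization repairs both the closedness of $K$ and the uniformity of the trapping level in one stroke, which is why that step is the crux of the argument.
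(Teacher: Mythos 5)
Your proof is correct. The overall skeleton is the one the paper uses --- start from the trapped sets $F_n\subseteq 2^n$ of Lemma \ref{l2}(1), form the clopen sets $\bigcup_{s\in F_n}[\hat{T}]_{t_s}$ of $\mu_T$-measure at least $\ee$, pass to a limit $K$ along a subsequence, and verify $[K]^{<\omega}\subseteq\fff$ by locating, for a given finite $\{x_0<\dots<x_l\}\subseteq K$, a single level $n$ at which the corresponding nodes all sit inside a trapped set and then invoking Lemma \ref{l2}(2). Where you diverge is in how the limit is extracted and how the measure bound is justified: the paper works in the hyperspace $\mathcal{K}([\hat{T}])$ with the Vietoris topology, takes a convergent subsequence of the $C_n$ by compactness, and gets $\mu_T(K)\geq\ee$ from upper semicontinuity of $E\mapsto\mu_T(E)$ on compact sets; you instead project each $F_N$ down to all lower levels, stabilize these projections by a pigeonhole diagonalization over the finite sets $\mathcal{P}(2^n)$, and obtain $K$ as the intersection of a genuinely decreasing coherent tower $K_n$, so that $\mu_T(K)\geq\ee$ follows from ordinary continuity of the measure from above. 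The two constructions in fact produce the same set (Vietoris convergence of unions of level-$n$ cylinders is exactly eventual stabilization of the level-$n$ projections), but your version trades the hyperspace machinery for an elementary combinatorial argument and makes the key uniformity --- that the trapping level can be chosen simultaneously for all points of a finite subset of $K$ --- completely explicit, which is precisely the point your closing remark correctly identifies as the crux.
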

\begin{proof}
By Lemma \ref{l2}(1), for every $n\in\omega$ there exists
$F_n\subseteq 2^n$ with $|F_n|\geq\ee\cdot 2^n$ and such that
$F_n$ is trapped in $\fff$. Define
\[ C_n=\bigcup_{s\in F_n} [\hat{T}]_{t_s}. \]
Then $C_n$ is a clopen subset of $[\hat{T}]$ and moreover
$\mu_T(C_n)\geq\ee$ for every $n\in\omega$. Let us denote by
$\mathcal{K}([\hat{T}])$ the hyperspace of all compact subsets of
$[\hat{T}]$ equipped with the Vietoris topology. It is a compact
metrizable space (see \cite{Kechris}). Hence, there exist an
infinite subset $L$ of $\omega$ and $K\in \mathcal{K}([\hat{T}])$
such that the sequence $(C_n)_{n\in L}$ is convergent to $K$. As
the measure $\mu_T$ is regular, the map $\mathcal{K}([\hat{T}])\ni
K\mapsto \mu_T(K)$ is upper semicontinuous. It follows that
\[ \mu_T(K) \geq \limsup_{n\in L} \mu_T(C_n) \geq \ee. \]
It remains to show that $[K]^{<\omega}\subseteq \fff$. Indeed, let
$\{x_0< ... < x_l\} \subseteq K$. Since $K\subseteq [\hat{T}]$,
there exist $k\in\omega$ and $\{ w_0\prec ... \prec w_l\}\subseteq
2^k$ such that $t_{w_i}\sqsubset x_i$ for all $i=0,...,l$ (notice
that $\ell(t_{w_0})=...=\ell(t_{w_l})$). The sequence $(C_n)_{n\in
L}$ converges to $K$ and so there exists $n_0\in L$ such that for
all $n\in L$ with $n\geq n_0$, the set $\{t_s: s\in F_{n}\}$
dominates the set $\{t_{w_0},...,t_{w_l}\}$. The tree $T$ is
regular dyadic and so $F_n$ dominates $\{w_0,...,w_l\}$. As every
$F_n$ is trapped in $\fff$, by Lemma \ref{l2}(2) we get that
$\{w_0,...,w_l\}$ is trapped in $\fff$ too. This clearly implies
that $\{x_0,...,x_l\}\in\fff$ and the proof is completed.
\end{proof}
\begin{proof}[Proof of Theorem \ref{t1}]
Let $P\subseteq \ccc$ perfect. As $\fff$ has the Galvin property,
by Lemma \ref{l1} there exists a regular dyadic tree $T$ such that
$T$ decides for $\fff$ and $[\hat{T}]\subseteq P$. Since $\fff$ is
$\ee$-filling, by Lemma \ref{l3} there exists $K\subseteq
[\hat{T}]$ closed with $\mu_T(K)\geq\ee$ and such that
$[K]^{<\omega}\subseteq\fff$. As $\mu_T$ is continuous, $K$ is an
uncountable closed subset of $P$ and the result follows.
\end{proof}
\noindent \textbf{Consequences.} We notice that for every Polish
space $X$, every closed subset $F$ of $X$ and every $C$-measurable
subset $A$ of $X$, the set $A\cap F$ is $C$-measurable in $F$.
Invoking the classical fact that every $C$-measurable subset of a
Polish space has the Baire property (hence, by the remarks at the
beginning of the section, the Galvin property too), we get the
following corollary of Theorem \ref{t1}.
\begin{cor}
\label{c1} Let $\fff\subseteq [\ccc]^{<\omega}$ be $\ee$-filling.
If $\fff$ is $C$-measurable in $[\ccc]^{<\omega}$, then for every
$P\subseteq \ccc$ perfect, there exists $Q\subseteq P$ perfect
with $[Q]^{<\omega}\subseteq\fff$.
\end{cor}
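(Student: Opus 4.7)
The plan is to deduce the corollary as an almost immediate consequence of Theorem \ref{t1}, so the only real work is to verify that $C$-measurability of $\fff$ in $[\ccc]^{<\omega}$ entails the Galvin property. Once that is established, the conclusion follows by applying Theorem \ref{t1} to the given perfect set $P$.

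To verify the Galvin property, I would fix $n\in\omega$ and perfect subsets $P_0<\dots<P_n$ of $\ccc$, and consider the product $P_0\times\dots\times P_n$, which is itself a Polish space (each $P_i$ being closed in $\ccc$). The natural identification of strictly increasing tuples in this product with elements of $[\ccc]^{n+1}\subseteq[\ccc]^{<\omega}$ makes $\fff\cap(P_0\times\dots\times P_n)$ a relatively $C$-measurable subset of $P_0\times\dots\times P_n$, since $C$-measurability is preserved under restriction to closed subspaces (as noted in the paragraph just before Corollary \ref{c1}). Invoking the classical fact that every $C$-measurable subset of a Polish space has the Baire property, we conclude that $\fff\cap(P_0\times\dots\times P_n)$ has the Baire property in the product. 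By the theorem of Galvin cited in the remark immediately after the definition of the Galvin property, we can then find perfect $Q_i\subseteq P_i$ so that $Q_0\times\dots\times Q_n$ is either contained in or disjoint from $\fff$. This is exactly what the Galvin property requires.

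With the Galvin property in hand, Theorem \ref{t1} applies directly to yield a perfect $Q\subseteq P$ with $[Q]^{<\omega}\subseteq\fff$, completing the proof.

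There is no substantive obstacle here; the corollary is a packaging statement. The only thing to be careful about is the bookkeeping around the identification of $[\ccc]^{n+1}$ with strictly increasing tuples in $\ccc^{n+1}$, and the stability of $C$-measurability under intersection with a closed subspace of a Polish space. Both are standard and explicitly flagged in the paragraph preceding the corollary.
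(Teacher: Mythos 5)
Your proposal is correct and follows exactly the route the paper takes: $C$-measurability passes to closed subspaces, $C$-measurable sets have the Baire property, Galvin's theorem then yields the Galvin property, and Theorem \ref{t1} finishes the argument. There is nothing to add.
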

As Projective Determinacy (PD) implies that every projective set
in a Polish space has the Baire property (see \cite{Kechris},
Theorem 38.17), under PD, Corollary \ref{c1} is also true for
every projective set.

There are some natural limitations on the possibility of extending
Corollary \ref{c1} for an arbitrary $\ee$-filling family. Indeed,
let $B$ be a Bernstein set, that is a subset of $\ccc$ such that
neither $B$ nor $\ccc\setminus B$ contain a perfect set. Setting
$\fff=[B]^{<\omega}\cup [\ccc\setminus B]^{<\omega}$ we see that
$\fff$ is $1/2$-filling, yet there does not exist a perfect set
$P$ with $[P]^{<\omega}\subseteq \fff$. Notice however that the
above counterexample is depended on the Axiom of Choice. As a
matter of fact, every counterexample known to us depends on the
Axiom of Choice. This is not an accident. As in the proof of
Theorem \ref{t1} we made no use of the Axiom of Choice, we have
the following corollary.
\begin{cor}
\label{c2} Assume ZF+DC and the statement that ``every subset of a
Polish space has the Baire property". Then for every $\ee$-filling
family $\fff$ over $\ccc$ and every $P\subseteq \ccc$ perfect,
there exists $Q\subseteq P$ perfect such that
$[Q]^{<\omega}\subseteq \fff$.
\end{cor}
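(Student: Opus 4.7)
The plan is to reduce directly to Theorem \ref{t1}. Under the hypothesis ``every subset of a Polish space has the Baire property'' it suffices to verify that an arbitrary $\ee$-filling family $\fff$ over $\ccc$ automatically has the Galvin property, and then to note that the proof of Theorem \ref{t1} itself is carried out in ZF+DC.

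First I would establish the Galvin property for $\fff$. Fix $n\in\omega$ and $P_0<\cdots<P_n$ perfect subsets of $\ccc$. Since $P_0\times\cdots\times P_n$ is a Polish space, the hypothesis gives that the trace $\fff\cap(P_0\times\cdots\times P_n)$ has the Baire property. As explicitly remarked after Definition 1, one can then apply Galvin's theorem (Kechris, Theorem 19.6) to this trace to obtain perfect sets $Q_i\subseteq P_i$ such that $Q_0\times\cdots\times Q_n$ is either entirely contained in, or entirely disjoint from, $\fff$. The crucial point here is that the standard proof of Galvin's theorem rests on a fusion-type recursive construction of length $\omega$, which is legitimate in ZF+DC.

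Next I would feed the Galvin property of $\fff$ into Theorem \ref{t1} to produce, for the given perfect $P\subseteq\ccc$, a perfect $Q\subseteq P$ with $[Q]^{<\omega}\subseteq\fff$. The main task is therefore an audit of the proof of Theorem \ref{t1} (together with Lemmas \ref{l1}, \ref{l2}, and \ref{l3}) for hidden invocations of AC. The recursive construction in Lemma \ref{l1} builds the regular dyadic tree $T=(t_s)_{s\in 2^{<\omega}}$ and the sets $P^s$ by $\omega$-many choices, hence is handled by DC; the passage from $(C_n)_{n\in\omega}$ to a convergent subsequence in the compact metrizable hyperspace $\mathcal{K}([\hat{T}])$ in Lemma \ref{l3} only uses sequential compactness; the upper semicontinuity of $K\mapsto\mu_T(K)$ and the explicit definition of $\mu_T$ as the image of the product measure on $\ccc$ under the natural homeomorphism require no choice at all.

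The main obstacle is precisely this bookkeeping: one must be sure that no step silently uses a well-ordering of $\ccc$ or an arbitrary selector. Once we confirm that every selection in the proof is either made from a countable index set (hence covered by DC) or made by a canonical, explicitly described rule, the argument in Theorem \ref{t1} applies verbatim, and Corollary \ref{c2} follows.
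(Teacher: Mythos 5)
Your proposal is correct and matches the paper's argument: the paper likewise derives the corollary by noting that under the stated hypothesis every family has the Galvin property (via Galvin's theorem applied to the traces on products of perfect sets, as remarked after Definition 1) and that the proof of Theorem \ref{t1} uses no choice beyond DC. Your explicit audit of the fusion/recursion steps is just a fleshed-out version of the paper's one-line observation.
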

We notice that the hypotheses of Corollary \ref{c2} hold in the
Solovay Model \cite{Sol} (see also \cite{BK} section 5.3, for a
discussion about this in a different but related context). A
similar result has been also obtained by A. W. Apter and M.
D\v{z}amonja \cite{AD}.
\begin{rem}
It follows by Corollary \ref{c1}, that if $\fff\subseteq
[\ccc]^{<\omega}$ is analytic and $\ee$-filling, then $\fff$
cannot be compact; that is there exists $A\subseteq \ccc$ infinite
such that $[A]^{<\omega}\subseteq\fff$. We should point out that
this can also be derived by the results of D. H. Fremlin in \cite{F}.
To see this, one argues by contradiction. So, assume that
$\fff\subseteq [\ccc]^{<\omega}$ is analytic, compact and
$\ee$-filling for some $\ee>0$. It was observed by S. A. Argyros,
J. Lopez-Abad and S. Todor\v{c}evi\'{c} that the rank of $\fff$ is
a countable ordinal whenever $\fff$ is analytic and compact. This
follows by a standard application of the Kunen-Martin theorem (see
\cite{Kechris}, Theorem 31.1). By Lemma 2C in \cite{F} applied to
the ideal $\mathcal{I}$ of countable subsets of $\ccc$, we get
that the rank of $\fff$ must be greater or equal to $\omega_1$,
which is a contradiction.
\end{rem}
\begin{rem}
By modifying the proof of Theorem \ref{t1} we have the following
result for an arbitrary family $\fff$.
\begin{thm}
\label{t3} Let $\ee>0$ and $\fff\subseteq [\ccc]^{<\omega}$ be an
arbitrary $\ee$-filling family. Then for every $P\subseteq \ccc$
perfect there exists $Q\subseteq P$ perfect such that for every
$R\subseteq Q$ perfect and every $k\geq 1$ the set $\fff\cap
[R]^k$ is dense in $[R]^k$.
\end{thm}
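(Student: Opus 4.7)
The plan is to modify the proof of Theorem \ref{t1} by replacing Lemma \ref{l1} with a construction that exploits $\ee$-filling directly, without using the Galvin property. I would construct a regular dyadic tree $T=(t_s)_{s\in 2^{<\omega}}$ with $[\hat{T}]\subseteq P$ and perfect sets $P^s\subseteq P\cap\ccc_{t_s}$ satisfying conditions (1), (2) of Lemma \ref{l1}, with the Galvin-based condition (3) replaced by the following stronger density requirement: for every $F\subseteq 2^n$ and every family $(R^s)_{s\in F}$ of perfect subsets $R^s\subseteq P^s$, we have $\fff\cap\prod_{s\in F}R^s\neq\varnothing$.

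To build this tree by recursion on $n$, at level $n+1$ one first fixes the splits $t_{s^\con 0}\prec t_{s^\con 1}$ and the initial $Q^{s^\con i}=P^s\cap\ccc_{t_{s^\con i}}$ as in Lemma \ref{l1}. Then, using $\ee$-filling, one performs an iterative pruning of the $Q^{s^\con i}$ via a fusion/Cantor-scheme argument: any perfect sub-box $\prod R^s$ disjoint from $\fff$ is eliminated by shrinking the relevant $Q^{s^\con i}$'s while preserving perfectness. The $\ee$-filling property is the combinatorial engine here: every finite selection of points across the cones admits an $\fff$-subset of density $\geq\ee$, which limits the extent to which perfect sub-boxes can wholly avoid $\fff$. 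For small $\ee$ (relative to $|F|$), a single application of $\ee$-filling does not produce a full transversal, so an iterative strategy—forcing hits in specific cones by strategic unbalanced choice of cone sizes, as in Lemma \ref{l2}(1)—is required.

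Once $T$ is constructed, set $Q=[\hat{T}]$. Given perfect $R\subseteq Q$, $k\geq 1$, and $u_0\prec\cdots\prec u_{k-1}\in\ct$ with $R\cap\ccc_{u_i}\neq\varnothing$, choose a sufficiently large level $n$ and indices $s_0\prec\cdots\prec s_{k-1}\in 2^n$ with $t_{s_i}\sqsupseteq u_i$ and $R^{s_i}:=R\cap[\hat{T}]_{t_{s_i}}\neq\varnothing$; the latter is possible because $T$ is regular dyadic and $R$ is perfect. Each $R^{s_i}$ is a non-empty perfect subset of $P^{s_i}$ (it is clopen in the perfect set $R$), so the modified condition (3) applied to $F=\{s_0,\ldots,s_{k-1}\}$ yields an $\fff$-tuple in $\prod_i R^{s_i}\subseteq\prod_i(R\cap\ccc_{u_i})$, establishing the required density of $\fff\cap[R]^k$ in $[R]^k$.

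The hard part will be the tree construction. In Theorem \ref{t1}, Galvin's dichotomy is available at each refinement step through Baire-measurability, providing the decision "product in or disjoint from $\fff$" for free. Here, without any definability hypothesis on $\fff$, one must eliminate perfect sub-boxes disjoint from $\fff$ across all subsets $F\subseteq 2^{n+1}$ simultaneously, while keeping each $P^{s^\con i}$ perfect. This demands a careful fusion argument in which $\ee$-filling is invoked repeatedly to block the persistence of large perfect sub-boxes avoiding $\fff$; all the quantitative delicacy of the proof is concentrated at this combinatorial step.
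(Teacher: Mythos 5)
There is a genuine gap, and it sits exactly where you locate ``all the quantitative delicacy'': the tree property you demand cannot be obtained by the construction you describe, and in fact your intermediate goal is essentially a restatement of the theorem rather than a reduction of it. Requiring that \emph{every} $F\subseteq 2^n$ and \emph{every} choice of perfect $R^s\subseteq P^s$ satisfy $\fff\cap\prod_{s\in F}R^s\neq\varnothing$ is equivalent to the conclusion of Theorem \ref{t3} for $Q=[\hat{T}]$ (take $R=\bigcup_s R^s$), so nothing has been gained by the reformulation. Moreover, the proposed ``elimination'' step is incoherent: if some perfect sub-box $\prod R^s$ is disjoint from $\fff$, shrinking $Q^{s}$ \emph{towards} it only makes matters worse, shrinking \emph{away} from it must be done for continuum many bad boxes simultaneously (no countable fusion reaches them all), and $\ee$-filling is provably too weak to force transversals: for $\fff=[\ccc_{(0)}]^{<\omega}\cup[\ccc_{(1)}]^{<\omega}$ (a $1/2$-filling family) no set with one point in each of two prescribed separated cones belongs to $\fff$, so no amount of unbalanced sampling as in Lemma \ref{l2}(1) produces a hit in a prescribed bad configuration. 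The theorem survives only because the final $Q$ is permitted to \emph{avoid} such configurations altogether, which your plan of taking $Q=[\hat{T}]$ for a tree fixed in advance cannot arrange.

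The paper's route repairs exactly this. First, the tree is only required to \emph{weakly decide} for $\fff$: for each pattern $F=\{s_0\prec\cdots\prec s_d\}$, either $([\hat{T}]_{t_{s_0}}\times\cdots\times[\hat{T}]_{t_{s_d}})\cap\fff=\varnothing$, or every product of perfect subsets of the cones meets $\fff$. This dichotomy needs no hypothesis on $\fff$ whatsoever: at each stage of the fusion one asks whether some perfect sub-box avoids $\fff$; if so one shrinks to that witness (and the first alternative persists under further shrinking), and if not the second alternative already holds (and also persists under shrinking). Only \emph{then} is $\ee$-filling used, exactly as in Lemma \ref{l2}(1), to show that each level $2^n$ contains a weakly trapped $F_n$ with $|F_n|\geq\ee\cdot 2^n$ (the sampled points put the full product of cones inside $\fff$'s trace, so the first alternative fails and the dichotomy yields the second); the same dichotomy also gives the domination lemma for weak trapping. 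Finally, the hyperspace/measure limit of Lemma \ref{l3} extracts a closed $K\subseteq[\hat{T}]$ with $\mu_T(K)\geq\ee$ all of whose finite node patterns are weakly trapped, and $Q$ is taken to be a perfect subset of $K$ --- not $[\hat{T}]$ itself. Your verification paragraph is fine once such a $Q$ is in hand, but your outline is missing both the decide-then-extract structure and the final passage to $K$, and these cannot be bypassed.
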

\noindent The proof of Theorem \ref{t3} follows the arguments of
the proof of Theorem \ref{t1}. The only severe change is that of
the notion of a regular dyadic the decides for $\fff$.
Specifically, Definition \ref{d1} is modified as follows.
\begin{defn}
Let $\fff\subseteq [\ccc]^{<\omega}$ and $T=(t_s)_{s\in\ct}$ be a
regular dyadic subtree of $\ct$. We say that the tree $T$ weakly
decides for $\fff$ if for every $n\in\omega$, every $0\leq d\leq
2^n-1$ and every $F=\{s_0\prec ...\prec s_d\}\subseteq 2^n$ we
have that one of the the following (mutually exclusive)
alternatives holds.
\begin{enumerate}
\item[(1)] Either $([\hat{T}]_{t_{s_0}}\times ... \times
[\hat{T}]_{t_{s_d}}) \cap\fff=\varnothing$, or \item[(2)] for
every $i=0,...,d$ and every $Q_i\subseteq [\hat{T}]_{t_{s_i}}$
perfect we have $(Q_0\times ...\times Q_d)\cap
\fff\neq\varnothing$.
\end{enumerate}
In the case where alternative (2) holds, then we say that $F$ is
weakly trapped in $\fff$.
\end{defn}
\noindent It can be easily checked that the arguments of the
proofs of Lemmas \ref{l1}, \ref{l2} and \ref{l3} can be carried
out using the above definitions, yielding the proof of Theorem
\ref{t3}.
\end{rem}


\section{Families of weaker density}

This section is devoted to the proof of Theorem B stated in the
introduction. For the convenience of the reader, let us present
the example of Fremlin which provides closed hereditary
families over $\ccc$ (of weaker density) for which Theorem
\ref{t1} is not valid.
\begin{examp}
\label{ex1} Let $f:\omega\to \omega$ be any function such that
$n\geq f(n)>0$ for all $n\geq 1$ and $\lim \frac{f(n)}{n}=0$. Then
there exists a family $\fff\subseteq [\ccc]^{<\omega}$ such that
the following hold.
\begin{enumerate}
\item[(1)] $\fff$ is closed in $[\ccc]^{<\omega}$ and hereditary.
\item[(2)] $d_\fff(n)\geq f(n)$ for all $n\geq 1$. \item[(3)]
There does not exist $A\subseteq\ccc$ infinite with
$[A]^{<\omega}\subseteq\fff$.
\end{enumerate}
Indeed, we can chose a strictly increasing sequence
$(n_k)_{k\in\omega}$ such that $n_0=1$ and $\sup_{i\geq n_k}
\frac{f(i)}{i} \leq \frac{1}{2^k}$ for all $k\geq 1$. We set
\[ \fff=\bigcup_{k\in\omega} \bigcup_{t\in 2^k} \Big\{ G: G\subseteq \ccc_{t}
\text{ and } |G|\leq \big\lceil n_{k+1}/2^k\big\rceil \Big\}.
\] It is easy to see that (1) and (3) are satisfied. To verify
(2), let $F\subseteq \ccc$ with $|F|=n$. Let $k\in\omega$ be such
that $n_k\leq n< n_{k+1}$. Then $F$ is partitioned in $\{ F\cap
\ccc_t\}_{t\in 2^k}$. There exists $t_0\in 2^k$ such that $|F\cap
\ccc_{t_0}|\geq \lceil n/2^k\rceil$. Let $G$ be any subset of
$F\cap\ccc_{t_0}$ with $|G|=\lceil n/2^k\rceil$. By the definition
of $\fff$ and the fact that $n<n_{k+1}$, we see that $G\in\fff$.
As $n\geq n_k$, we have $\frac{f(n)}{n}\leq \frac{1}{2^k}$ and so
$f(n)\leq \lceil n/2^k\rceil\leq d_\fff(n)$.
\end{examp}
Let us pass now to the proof of the main result of this section
(Theorem \ref{t2} below). Observe that for every $P\subseteq \ccc$
perfect and every $k$-type $\tau$, the set $[P]^k_{\tau}$ is
non-empty. We will need a finite version of this fact. To this
end, we make the following definitions.
\begin{defn}
Let $n\geq 1$. A finite subtree $T$ of the Cantor tree
$2^{<\omega}$ is said to be $n$-increasing if $T$ can be written
in the form $T=(t_s)_{s\in 2^{<n}}$ such that for all $s_1, s_2\in
2^{<n}$ the following are satisfied.
\begin{enumerate}
\item[(1)] $t_{s_1}\sqsubset t_{s_2}$ (respectively $t_{s_1}\prec
t_{s_2}$) if and only if $s_1\sqsubset s_2$ (respectively
$s_1\prec s_2$). \item[(2)] If $\ell(s_1)=\ell(s_2)$ and $s_1\prec
s_2$, then $\ell(t_{s_1})<\ell(t_{s_2})$. \item[(3)] If
$\ell(s_1)<\ell(s_2)$, then $\ell(t_{s_1})<\ell(t_{s_2})$.
\end{enumerate}
\end{defn}
\begin{defn}
A subset $F\subseteq \ccc$ with $|F|=2^n$ is said to be
$2^n$-increasing if the set $\spl(T_F)$ of splitting nodes of
$T_F$ forms an $n$-increasing subtree of $2^{<\omega}$. The set of
all $2^n$-increasing subsets of $\ccc$ will be denoted by $\incr$.
\end{defn}
It is easy to see that if $F$ is a $2^n$-increasing subset of
$\ccc$, then $T_F$ is a skew subtree of $\ct$. The class of
increasing subsets of $\ccc$ has the following stability property.
\begin{lem}
\label{l4} Let $n\geq 2$ and $k\geq 1$ be such that $2^n\geq n^k$.
Then for every $F\in \incr$ and every $G\subseteq F$ with $|G|\geq
n^k$ there exists $H\subseteq G$ with $H\in [\ccc]^{2^k}_{\vartriangle}$.
\end{lem}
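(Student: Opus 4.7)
The plan is to prove Lemma~\ref{l4} by induction on $k$. The base case $k=1$ is immediate: every two-element subset of $\ccc$ lies in $[\ccc]^{2}_\vartriangle$ (its splitting tree consists of a single node, vacuously $1$-increasing), so any pair in $G$ works, and $|G|\geq n\geq 2$ supplies one.

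For the inductive step from $k$ to $k+1$, identify $F$ with $\{x_\sigma:\sigma\in 2^n\}$ in $\prec$-order and write $G=\{x_\sigma:\sigma\in G'\}$ for some $G'\subseteq 2^n$ with $|G'|\geq n^{k+1}$. I would locate a root splitting node $u\in 2^{<n}$ for the desired $H$ by a descent in $2^{<n}$: starting at $u=\emptyset$, replace $u$ by its larger child whenever one of
\[G'_0(u):=\{v:u^\con 0^\con v\in G'\},\qquad G'_1(u):=\{v:u^\con 1^\con v\in G'\}\]
has fewer than $n^k$ elements. Each descent step discards fewer than $n^k$ elements of $G'$ (the smaller of the two halves), so beginning from $|G'|\geq n^{k+1}=n\cdot n^k$ the descent must terminate within a bounded number of steps at a node $u$ with $\min\{|G'_0(u)|,|G'_1(u)|\}\geq n^k$; the upper bound $2^{n-|u|}$ on the total number of extensions of $u$ in $2^n$ forces termination once the surviving mass would otherwise exceed this total.

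Each sub-hypercube $F_i:=\{x_{u^\con i^\con v}:v\in 2^{n-|u|-1}\}$ is $2^{n-|u|-1}$-increasing, and $2^{n-|u|-1}\geq|G'_i(u)|\geq n^k\geq(n-|u|-1)^k$ makes the inductive hypothesis applicable inside each. This yields $H_0,H_1$ of size $2^k$ in $[\ccc]^{2^k}_\vartriangle$, contained in $G\cap F_0$ and $G\cap F_1$, respectively. Setting $H:=H_0\cup H_1$ gives $|H|=2^{k+1}$ with $T_H$ having root split $t_u$ and left/right subtrees $T_{H_0},T_{H_1}$; the tree is $2^{<k+1}$-shaped, and the length orderings inside each of $T_{H_0},T_{H_1}$ come from the inductive hypothesis.

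The main obstacle is the cross-level length condition required for $H\in[\ccc]^{2^{k+1}}_\vartriangle$: at each sub-level $i\geq 1$ of $2^{<k+1}$, every split of $T_{H_0}$ at sub-level $i-1$ must precede every split of $T_{H_1}$ at sub-level $i-1$ in the length-then-$\prec$ ordering on $2^{<\omega}$. This does not follow from independent applications of the inductive hypothesis. I would remedy this by choosing $H_0$ and $H_1$ with matched split-length profiles: apply the inductive hypothesis inside the intersection $G'_0(u)\cap G'_1(u)\subseteq 2^{n-|u|-1}$ to obtain a single index set $S$ of size $2^k$ with $\{x_{u^\con 0^\con v}:v\in S\}\in[\ccc]^{2^k}_\vartriangle$, and set $H_i:=\{x_{u^\con i^\con v}:v\in S\}$ for both $i$. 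The $2^n$-increasing property of $F$ gives $\ell(t_{u^\con 0^\con w})<\ell(t_{u^\con 1^\con w})$ for every $w\in 2^{<n-|u|-1}$ (since $u^\con 0^\con w\prec u^\con 1^\con w$ at equal lengths in $2^{<n}$), so the matched profiles automatically yield the cross condition. The quantitative subtlety is to ensure $|G'_0(u)\cap G'_1(u)|\geq(n-|u|-1)^k$ for this modified hypothesis to apply; by inclusion-exclusion $|G'_0(u)\cap G'_1(u)|\geq 2n^k-2^{n-|u|-1}$, so the descent must be pushed deep enough that $2^{n-|u|-1}$ is comparable to $n^k$, which is the key quantitative balancing in the proof.
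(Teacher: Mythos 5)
There is a genuine gap at the recombination step, and it is more serious than the ``subtlety'' you flag. The cross-level condition that $H=H_0\cup H_1$ must satisfy is not $\ell(t_{u^\con 0^\con w})<\ell(t_{u^\con 1^\con w})$ for each single $w$, but $\ell(t_{u^\con 0^\con w})<\ell(t_{u^\con 1^\con w'})$ for \emph{every pair} $w\neq w'$ of splitting positions of $\hat{S}$ lying at the same abstract level $j$: in the $\prec$-ordering of level $j+1$ of $\spl(T_H)$ all nodes coming from $H_0$ precede all nodes coming from $H_1$, so the largest length on the $H_0$ side must fall below the smallest on the $H_1$ side. Matched profiles do not deliver this. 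The inductive hypothesis gives you an $S$ whose level-$j$ splitting positions $w\prec w'$ satisfy $\ell(t_{u^\con 0^\con w})<\ell(t_{u^\con 0^\con w'})$; by conditions (2) and (3) of the $n$-increasing property of $\spl(T_F)$ this only rules out $\ell(w)>\ell(w')$ in the position cube, and in the generic case $\ell(w)<\ell(w')$ strictly. But then $u^\con 0^\con w'$ is strictly longer than $u^\con 1^\con w$, and condition (3) forces $\ell(t_{u^\con 0^\con w'})>\ell(t_{u^\con 1^\con w})$ --- the wrong inequality. Your trick works only if all level-$j$ splitting positions of $\hat{S}$ have \emph{equal} length in $2^{<n'}$, which is a strictly stronger conclusion than your inductive hypothesis provides. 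Separately, the intersection $G'_0(u)\cap G'_1(u)$ can simply be empty: the descent stops at the first node where both halves have $\geq n^k$ elements, which may be $u=\varnothing$ with $2^{n'}=2^{n-1}\gg n^k$, so inclusion--exclusion gives nothing, and there is no way to ``push deeper'' while keeping both halves large, since each further step commits to one child and discards the other.

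The paper sidesteps both problems by never recombining two independently chosen pieces. It works entirely inside $\spl(T_F)$: pigeonholing $\spl(T_G)$ over the $n$ levels of $\spl(T_F)$ yields $A_0$ of size $\geq n^{k-1}$ contained in a \emph{single} level $L_F(j_0)$; pigeonholing $\spl(\hat{A}_0)$ (which lives in levels below $j_0$) yields $A_1$ of size $\geq n^{k-2}$ in a single level $L_F(j_1)$ with $j_1<j_0$; and so on for $k$ steps. Confinement of each $A_m$ to one level of $\spl(T_F)$ is precisely the level-alignment your argument is missing: it makes every required length comparison, within a level and across levels, follow directly from conditions (2) and (3) of the $n$-increasing property. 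If you want to salvage your induction, you would have to strengthen the induction hypothesis to produce a level-aligned $S$, at which point you are essentially reproving the paper's pigeonhole argument.
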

\begin{proof}
Let $\spl(T_F)=(t_s)_{s\in 2^{<n}}$ be the set of splitting nodes
of the tree $T_F$. By our assumption, it is $n$-increasing. For
every $0\leq j\leq n-1$, we let $L_F(j)=\{ t_s\in \spl(T_F): s\in
2^j\}$. By the definition of $n$-increasing subtrees, the set
$L_F(j)$ is the $j$-level of $\spl(T_F)$ and so it is an antichain
of $\ct$. Let also $\spl(T_G)$ be the set of splitting nodes of
the tree $T_G$. Clearly $\spl(T_G)$ is a subset of $\spl(T_F)$.

Inductively, for every $0\leq m\leq k-1$ we shall construct
$j_m\in\omega$ and a subset $A_{m}$ of $\ct$ such that the
following are satisfied.
\begin{enumerate}
\item[(1)] $0\leq j_m \leq n-1$ and if $m_1<m_2$, then
$j_{m_1}>j_{m_2}$. \item[(2)] $2^{j_m}\geq n^{k-m-1}$. \item[(3)]
$A_{m}\subseteq L_F(j_m)$ and $|A_{m}|\geq n^{k-m-1}$. \item[(4)]
If $0\leq m_1<m_2\leq k-1$, then $A_{m_2}$ is a subset of
$\spl(\hat{A}_{m_1})$. \item[(5)] For all $0\leq m\leq k-1$, we
have $A_m\subseteq \spl(T_G)$.
\end{enumerate}
We start the construction. Notice  that the family $\{
\spl(T_G)\cap L_F(j)\}_{j=0}^{n-1}$ forms a partition of
$\spl(T_G)$. Since $|\spl(T_G)|=|G|-1\geq n^k-1$ there exists
$l\in\{0, ..., n-1\}$ such that $|\spl(T_G)\cap L_F(l)|\geq
n^{k-1}$. Notice that $|L_F(l)|=2^l\geq n^{k-1}$. We set $j_0=l$
and $A_0=\spl(T_G)\cap L_F(l)$. Then conditions (2), (3) and (5)
satisfied. This completes the first step of the inductive
construction. As $A_0$ is an antichain, being a subset of
$L_F(j_0)$, we have that $|\spl(\hat{A}_0)|=|A_0|-1\geq
n^{k-1}-1$. As in the first step, we notice that the family $\{
\spl(\hat{A}_0)\cap L_F(j)\}_{j=0}^{j_0-1}$ forms a partition of
$\spl(\hat{A}_0)$. Hence, there exists $l'\in\{ 0, ..., j_0-1\}$
such that $|\spl(\hat{A}_0)\cap L_F(l')|\geq n^{k-2}$. We set
$j_1=l'$ and $A_1=\spl(\hat{A}_0)\cap L_F(l')$. We proceed
similarly.

We isolate the crucial properties established by the above
construction.
\begin{enumerate}
\item[(P1)] For every $1\leq m\leq k-1$ and every $w\in A_m$, the
node $w$ has at least two successors in $A_{m-1}$.
\item[(P2)] For every $0\leq m \leq k-1$, if $w_1, w_2\in A_m$ with
$w_1\prec w_2$, then $\ell(w_1)<\ell(w_2)$.
\item[(P3)] For every $0\leq m_1< m_2 \leq k-1$, if $w_1 \in A_{m_1}$
and $w_2 \in A_{m_2}$, then $\ell(w_1)>\ell(w_2)$.
\end{enumerate}
Property (P1) follows by (4) of the construction while properties
(P2) and (P3) follow by (3) and (1) above and the fact that
$\spl(T_F)$ is $n$-increasing.

Using (P1)-(P3) and starting from a node in $A_{k-1}$ we construct
a $k$-increasing subtree $T=(w_s)_{s\in 2^{<k}}$ which is, by
condition (5) of the inductive construction, a subset of
$\spl(T_G)$. This clearly implies the lemma.
\end{proof}
\begin{lem}
\label{l5} Let $k\geq 1$ and $H\in [\ccc]^{2^k}_\vartriangle$.
Then for every $(k+1)$-type $\tau$ there exists $I\subseteq H$ of
type $\tau$.
\end{lem}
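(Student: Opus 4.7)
The plan is to embed the binary tree underlying $\tau$ into the splitting tree $\spl(T_H)=(t_s)_{s\in 2^{<k}}$ via a length-controlled map, and then read off $I$ from the induced cells. Identify $\tau$ with a binary tree $B_\tau$ on $k+1$ leaves, whose internal nodes $v_1,\ldots,v_k$ are labeled by their creation order (each label exceeds the labels of its ancestors in $B_\tau$, and $v_n$ is the unique internal node with label $n$).

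Define a map $\pi\colon\{v_1,\ldots,v_k\}\to 2^{<k}$ recursively by $\pi(v_1)=\varnothing$ and, for $n\geq 2$ with $v_m$ the parent of $v_n$ in $B_\tau$ and $c\in\{0,1\}$ indicating the side ($0$ for left child, $1$ for right),
\[
\pi(v_n) \;=\; \pi(v_m)\con c\con 0^{\,n-m-1}.
\]
By induction $|\pi(v_n)|=n-1$, so $\pi$ lands in $2^{<k}$. The recursion makes $\pi$ preserve both $\sqsubset$ and $\prec$. Because $|\pi(v_n)|$ is strictly increasing in $n$, condition (3) of the $k$-increasing definition yields $\ell(t_{\pi(v_1)})<\cdots<\ell(t_{\pi(v_k)})$.

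To produce $I$, for each leaf $\ell$ of $B_\tau$ with parent $w_r$ and side $c_\ell\in\{0,1\}$, pick any $h_\ell\in H$ whose $2^k$-index extends $\pi(w_r)\con c_\ell$; this is possible since $|\pi(w_r)\con c_\ell|\leq k$ and $H$ is indexed by $2^k$. The $\sqsubset$-preservation of $\pi$ makes distinct leaves produce incomparable prefixes, so the $h_\ell$ are distinct and $|I|=k+1$. Moreover the $\ct$-meet of any two $h_\ell,h_{\ell'}$ equals $t_{\pi(v)}$ for $v$ the $B_\tau$-LCA of $\ell,\ell'$; since $T_I\subseteq T_H$ is skew, it follows that $\spl(T_I)=\{t_{\pi(v_n)}:1\leq n\leq k\}$ and that these $k$ splits are ordered by length exactly as $\tau$ prescribes.

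The remaining verification is that for each $n$ the $n$-th split appears in position $\tau(n)$ among the current leaves of $T_I$. This reduces to checking that the $\prec$-order on the cells of the partial tree obtained after the first $n-1$ splits matches the left-to-right order on the corresponding unprocessed subtrees of $B_\tau$; this follows from the $\prec$-preservation of $\pi$ together with the fact that $\ccc_u\prec\ccc_{u'}$ iff $u\prec u'$ for incomparable $u,u'\in\ct$, and a small case analysis (sibling cells versus cells whose representatives diverge at an already-processed common ancestor). I expect this last step to require the most care; the padding $0^{\,n-m-1}$ in the definition of $\pi$ is engineered precisely so that label and length travel in lockstep, which is what keeps depth and left-right position from interfering and makes the verification transparent despite the constraints imposed by the $k$-increasing embedding.
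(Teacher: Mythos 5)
Your proof is correct, but it takes a genuinely different route from the paper's. The paper argues by induction on $k$: writing $H=\{y_0<\dots<y_{2^{k+1}-1}\}$, it notes that the even-indexed points $E=\{y_{2i}\}$ form a $2^k$-increasing set whose splitting tree is the bottom $k$ levels of $\spl(T_H)$, applies the inductive hypothesis to $\tau|_{\{1,\dots,k\}}$ to get $I'=\{y_{2i_0}<\dots<y_{2i_k}\}$, and then adjoins the single point $y_{2i_{\tau(k+1)}+1}$; the only new check is that the one added splitting node, being a top-level (hence longest) node lying below the $\tau(k+1)$-st element of $I'$, realizes the last value of the type. You instead build the whole selection in one step, embedding the increasing binary tree $B_\tau$ into the index tree $2^{<k}$ with the padding $0^{n-m-1}$ forcing $|\pi(v_n)|=n-1$, so that condition (3) of the $n$-increasing definition orders the $k$ chosen splitting nodes by length in creation order, while $\prec$-preservation controls the left-to-right positions. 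Both proofs exploit the same structural feature of increasing trees (the $n$-th split can be taken at level $n-1$, with level governing length), but yours is non-inductive and makes the combinatorics explicit, at the cost of the final order-matching verification, which you only sketch. That sketch is sound: for two cells of the partial tree whose $B_\tau$-LCA is $v_p$ with $p<n$, the corresponding addresses extend $\pi(v_p)^\con 0$ and $\pi(v_p)^\con 1$ respectively, so the $\prec$-order of the cells agrees with the left-to-right order of the unsplit subtrees of $B_\tau$, and hence the $n$-th split occurs in position $\tau(n)$. (Two minor imprecisions, neither a gap: the distinctness of the $h_\ell$ uses not just $\sqsubset$-preservation but the fact that descendants of the two children of $v_p$ receive addresses extending $\pi(v_p)^\con 0$ and $\pi(v_p)^\con 1$; and the ``$2^k$-index'' should be spelled out as the bijection $2^k\to H$ recording the direction taken at each splitting node $t_s$, $s\sqsubset u$.)
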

\begin{proof}
By induction on $k$. If $k=1$, then the result is trivial since we
can set $I=H$. Suppose that the result holds for some $k\geq 1$.
Let $H\in [\ccc]^{2^{k+1}}_\vartriangle$ and $\tau:\{1, ..., k+1\}
\to \omega$ be a $(k+2)$-type. Write $H$ in lexicographically
increasing order as $H=\{ y_0<...<y_{2^{k+1}-1}\}$ and put $E=\{
y_i: 0\leq i<2^{k+1}, i \text{ even}\}$. Let
$\spl(T_H)=(t_s)_{s\in 2^{<k+1}}$. It is easy to see that
$\spl(T_{E})=(t_s)_{s\in 2^{<k}}$ and so $E\in
[\ccc]^{2^k}_\vartriangle$. Let $\tau'=\tau|_{\{1,...,k\}}$. Then
$\tau'$ is a $(k+1)$-type. By our inductive assumption, there
exists $I'\subseteq E$ of type $\tau'$. There exists $\{
i_0<...<i_k\}\subseteq \{0,...,2^k-1\}$ such that $I'=\{
y_{2i_0}<...<y_{2i_k}\}$. We let $I=I'\cup \{
y_{2i_{\tau(k+1)}+1}\}$. Then $I\subseteq G$ and it is easy to
check that $I$ is of type $\tau$.
\end{proof}
\begin{lem}
\label{l6} Let $\fff\subseteq [\ccc]^{<\omega}$ hereditary, $n\geq
2$ and $k\geq 2$ be such that $d_\fff(2^n)\geq n^{k-1}$. Then for
every $P\subseteq \ccc$ perfect and every $k$-type $\tau$ there
exists $I\in\fff\cap [P]_\tau^k$.
\end{lem}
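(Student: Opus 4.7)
The plan is to chain together Lemmas \ref{l4} and \ref{l5}. Fix $P\subseteq\ccc$ perfect and a $k$-type $\tau$. First I would produce a $2^n$-increasing subset $F\subseteq P$, i.e. $F\in\incr$ with $F\subseteq P$. This is the sort of routine inductive construction one does inside a perfect set: because $P$ contains the body of a regular dyadic subtree of $\ct$, one can select $2^n-1$ splitting nodes placed at strictly increasing lengths and arranged in the order prescribed by the definition of an $n$-increasing tree, then pick $2^n$ branches through them.

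Once such an $F$ is fixed, the density hypothesis $d_\fff(2^n)\geq n^{k-1}$ yields $G\subseteq F$ with $G\in\fff$ and $|G|\geq n^{k-1}$. Now I apply Lemma \ref{l4} with $k-1$ in place of $k$: its hypothesis $2^n\geq n^{k-1}$ is automatic from $d_\fff(2^n)\leq 2^n$, and $k-1\geq 1$ follows from the assumption $k\geq 2$. Hence Lemma \ref{l4} furnishes $H\subseteq G$ with $H\in[\ccc]^{2^{k-1}}_\vartriangle$. Next I apply Lemma \ref{l5} with $k-1$ in place of $k$: since $\tau$ is a $k$-type, viewed as a $((k-1)+1)$-type, there is $I\subseteq H$ of type $\tau$. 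Heredity of $\fff$ combined with $I\subseteq H\subseteq G\in\fff$ gives $I\in\fff$, and $I\subseteq F\subseteq P$ gives $I\in[P]^k_\tau$, completing the proof.

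The only step requiring independent work is the initial production of $F$; every subsequent step is a direct invocation of a lemma already proved. I expect this first step to be the main technical point, but it is essentially bookkeeping — translating the abstract definition of an $n$-increasing tree into an explicit choice of splitting nodes inside the regular dyadic tree guaranteed by the perfectness of $P$ — and presents no genuine obstacle.
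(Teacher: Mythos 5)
Your proposal is correct and follows exactly the paper's own argument: extract a $2^n$-increasing $F\subseteq P$ from perfectness, use the density hypothesis to get $G\in\fff$ with $|G|\geq n^{k-1}$, then apply Lemma \ref{l4} and Lemma \ref{l5} (each with $k-1$ in place of $k$) and conclude by heredity. The paper likewise treats the existence of a $2^n$-increasing subset of a perfect set as routine and states it without proof.
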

\begin{proof}
As $P$ is perfect, there exists a $2^n$-increasing subset $F$ of
$P$. Since $d_\fff(2^n)\geq n^{k-1}$, there exists $G\subseteq F$
with $G\in\fff$ and $|G|\geq n^{k-1}$. Notice that $2^n\geq
d_\fff(2^n)\geq n^{k-1}$. Hence, by Lemma \ref{l4}, there exists
$H\subseteq G$ which is $2^{k-1}$-increasing. By Lemma \ref{l5},
there exists $I\subseteq H$ of type $\tau$. As $I\subseteq
H\subseteq G\in \fff$ and $\fff$ is hereditary, the result
follows.
\end{proof}
We are ready to state and prove the main result of this section.
To this end, we recall A. Blass' theorem \cite{B} on partitions of
$[\ccc]^k$, which states that if $U$ is open subset of $[\ccc]^k$
and $\tau$ is a $k$-type, then there exists $P\subseteq\ccc$
perfect (which is the body of a skew tree) such that either
$[P]^k_{\tau}\subseteq U$ or $[P]^k_{\tau}\cap U=\varnothing$.
\begin{thm}
\label{t2} Let $\fff\subseteq [\ccc]^{<\omega}$ hereditary.
\begin{enumerate}
\item[(1)] Let $n\geq 2$ and $k\geq 1$ be such that
$d_\fff(2^n)\geq n^{k-1}$. If $\fff\cap [\ccc]^k$ has the Baire
property, then there exists $P\subseteq \ccc$ perfect such that
$[P]^k\subseteq\fff$. \item[(2)] Assume that $\fff$ has the Baire
property in $[\ccc]^{<\omega}$ and satisfies
\begin{equation*}
(\ast) \ \ \ \ \limsup \frac{\log_2 d_\fff(2^n)}{\log_2
n}=+\infty.
\end{equation*}
Then for every $k\geq 1$ there exists $P\subseteq \ccc$ perfect
such that $[P]^k\subseteq\fff$.
\end{enumerate}
\end{thm}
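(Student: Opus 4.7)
The plan is to derive part (1) from Blass' partition theorem applied to the set $\fff\cap[\ccc]^k$, and then to obtain part (2) as an almost immediate corollary. Since the version of Blass' theorem stated in the excerpt concerns open subsets of $[\ccc]^k$, my first step is to lift it to subsets with the Baire property. A standard route is to write a BP set $A$ as $U\triangle M$ with $U$ open and $M$ meager, then invoke a Mycielski-style theorem to find a perfect $Q\subseteq\ccc$ with $[Q]^k\cap M=\varnothing$, after which Blass' theorem applies to the relatively open set $U\cap[Q]^k$ via the natural tree isomorphism identifying the body of a skew tree with $\ccc$.

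Equipped with this BP version, I would iterate it once for each of the $(k-1)!$ many $k$-types. Starting from $\ccc$ and shrinking to a perfect skew sub-body at each stage, after $(k-1)!$ steps one obtains a single perfect $P\subseteq\ccc$ (the body of a skew tree) such that for every $k$-type $\tau$ either $[P]^k_\tau\subseteq\fff$ or $[P]^k_\tau\cap\fff=\varnothing$. Lemma \ref{l6} rules out the second alternative under the hypothesis $d_\fff(2^n)\geq n^{k-1}$: it furnishes, for every type $\tau$, some $I\in\fff\cap[P]^k_\tau$. Since $P$ is the body of a skew tree, every $k$-subset of $P$ has a well-defined type, so $[P]^k=\bigcup_\tau [P]^k_\tau\subseteq\fff$, completing part (1).

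For part (2), the case $k\geq 2$ is immediate: condition $(\ast)$ yields some $n\geq 2$ with $d_\fff(2^n)\geq n^{k-1}$, and $\fff\cap[\ccc]^k$ inherits the Baire property since $[\ccc]^k$ is clopen in $[\ccc]^{<\omega}$, so part (1) applies. The case $k=1$ must be treated separately. Setting $A=\{x\in\ccc:\{x\}\in\fff\}$, which has the Baire property in $\ccc$, the classical perfect set dichotomy for BP sets yields either a perfect subset of $A$ (in which case we are done) or a perfect $P'\subseteq\ccc\setminus A$. In the latter case no singleton of $P'$ lies in $\fff$, so by hereditarity $\fff\cap[P']^{<\omega}=\{\varnothing\}$, forcing $d_\fff(n)=0$ for every $n\geq 1$, which contradicts $(\ast)$.

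The principal obstacle is the careful execution of the Baire-property extension of Blass' theorem together with the iteration over the $(k-1)!$ many $k$-types; the remaining work is essentially bookkeeping plus a single invocation of Lemma \ref{l6} for each type.
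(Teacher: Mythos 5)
Your proposal is correct and follows essentially the same route as the paper: reduce to the relatively open case via Mycielski's theorem, apply Blass' partition theorem once per $k$-type, use Lemma \ref{l6} to exclude the disjoint alternative, and deduce part (2) by a direct computation from $(\ast)$. The only (harmless) divergence is that you handle $k=1$ by a separate Baire-property dichotomy in part (2), whereas the paper disposes of it as a trivial case of part (1).
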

\begin{proof}
We argue first for part (1). If $k=1$ the result is trivial.
So let $k\geq 2$ and assume that $\fff$ has the Baire property
in $[\ccc]^k$. By a classical result of J. Mycielski (see
\cite{Kechris}) and by passing to a perfect subset of $\ccc$,
we may assume that $\fff\cap [\ccc]^k$ is open. Fix a $k$-type
$\tau$. By A. Blass' theorem there exists $P\subseteq \ccc$
perfect such that $[P]^k_{\tau}$ either is included in $\fff$
or is disjoint from $\fff$. The second alternative is impossible
by Lemma \ref{l6}. So the result follows by a finite exhaustion
argument over all possible $k$-types. Part (2) follows from
part (1) by a direct computation.
\end{proof}
\begin{rem}
We do not know whether equation $(\ast)$ in Theorem \ref{t2}(2) is
the optimal one. We notice, however, that the conclusion of part
(2) of Theorem \ref{t2} is not valid if we only assume that $\lim
d_\fff(n)=+\infty$. For instance, let $\fff$ be the union of all
strongly increasing and strongly decreasing finite subsets of
$\ccc$ (recall that a subset $\{x_0<... <x_k\}$ of $\ccc$ is said
to be strongly increasing if $\ell(x_i\wedge
x_{i+1})<\ell(x_{i+1}\wedge x_{i+2})$ for all $i\in \{0,...,
k-2\}$ -- a strongly decreasing subset of $\ccc$ is similarly
defined). Then $\fff$ is closed in $[\ccc]^{<\omega}$ and it is
easy to verify that $\lim d_\fff(n)= +\infty$. However, for every
$k\geq 4$ there does not exist a perfect subset $P$ of $\ccc$ with
$[P]^k\subseteq \fff$.
\end{rem}
\noindent \textbf{Consequences.} We start with the following
proposition which shows that the families presented in Example
\ref{ex1} are essentially the only ones within $C$-measurable
hereditary families which satisfy $(\ast)$.
\begin{prop}
\label{p1} Let $\fff\subseteq [\ccc]^{<\omega}$ hereditary. Assume
that $\fff$ is $C$-measurable in $[\ccc]^{<\omega}$ and satisfies
equation $(\ast)$ of Theorem \ref{t2}. Then for every
$g:\omega\to\omega$ and every $P\subseteq\ccc$ perfect there
exists a regular dyadic subtree $T=(t_s)_{s\in\ct}$ with
$[\hat{T}]\subseteq P$ and such that $\mathcal{G}\subseteq \fff$,
where $\mathcal{G}=\bigcup_{k\in\omega} \bigcup_{s\in 2^k} \big\{
G: G\subseteq [\hat{T}]_{t_s} \text{ and } |G|\leq g(k) \big\}$.
\end{prop}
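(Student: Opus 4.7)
The plan is to build the tree $T=(t_s)_{s\in\ct}$ together with an auxiliary family $(P^s)_{s\in\ct}$ of perfect subsets of $\ccc$ by recursion on $\ell(s)$, arranging the invariants (i) $P^\varnothing\subseteq P$, (ii) $P^s\subseteq\ccc_{t_s}$, (iii) $P^{s^\con 0},P^{s^\con 1}\subseteq P^s$, and (iv) $[P^s]^{g(k)}\subseteq\fff$ whenever $s\in 2^k$. Once (i)--(iv) are secured, the inclusions $P^\varnothing\subseteq P$ and $P^{s^\con i}\subseteq P^s$ give $[\hat{T}]\subseteq P$ and $[\hat{T}]_{t_s}\subseteq P^s$ for every $s\in\ct$; then (iv) combined with the hereditary property of $\fff$ shows that every $G\subseteq[\hat{T}]_{t_s}$ with $|G|\le g(k)$ belongs to $\fff$, which is exactly $\mathcal{G}\subseteq\fff$.

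The recursion is driven by (the relativized form of) Theorem \ref{t2}(2), asserting that for every perfect $Q\subseteq\ccc$ and every $k\ge 1$ there exists a perfect $P\subseteq Q$ with $[P]^k\subseteq\fff$. Granting this, the construction runs as follows. Set $t_\varnothing=\varnothing$ and apply it inside $P$ with parameter $g(0)$ to pick a perfect $P^\varnothing\subseteq P$ with $[P^\varnothing]^{g(0)}\subseteq\fff$. At stage $k+1$, having constructed $(t_s,P^s)_{s\in 2^{\le k}}$, exploit perfectness of each $P^s\subseteq\ccc_{t_s}$ to pick two incompatible candidates for $t_{s^\con 0}\prec t_{s^\con 1}$ extending $t_s$, and then uniformly lengthen them so that $\ell(t_{s^\con i})$ is the same across all $s\in 2^k$ and $i\in\{0,1\}$ (ensuring regularity of $T$). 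Set $Q^{s^\con i}:=P^s\cap\ccc_{t_{s^\con i}}$; this is a perfect subset of $P^s$, to which we apply the relativized Theorem \ref{t2}(2) with parameter $g(k+1)$ to extract a perfect $P^{s^\con i}\subseteq Q^{s^\con i}$ with $[P^{s^\con i}]^{g(k+1)}\subseteq\fff$. (The case $g(k+1)=0$ is vacuous.)

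The main point to verify, and what I would flag as the principal obstacle, is the relativization of Theorem \ref{t2}(2), which is not stated explicitly in the paper. However, both ingredients of its proof --- Mycielski's theorem on the Baire property and A.~Blass' partition theorem, combined with Lemma \ref{l6} --- work equally well inside any perfect subset $Q$ of $\ccc$: Mycielski applied to $\fff\cap [Q]^k$ lets us pass to a perfect subset of $Q$ on which this set is relatively open, a $2^n$-increasing configuration (needed for Lemma \ref{l6}) can be found inside any perfect set, and Blass' theorem can be invoked relative to $Q$ after identifying a regular dyadic subtree of $Q$ with $\ct$. With the relativized statement in hand, invariants (i)--(iv) are preserved at every step and the proposition follows.
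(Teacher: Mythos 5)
Your proposal is correct and follows essentially the same route as the paper: the authors likewise first note that Theorem \ref{t2}(2) relativizes to give, for every perfect $P$ and every $m\geq 1$, a perfect $Q\subseteq P$ with $[Q]^m\subseteq\fff$, and then run exactly your recursion (modelled on Lemma \ref{l1}) producing $(t_s,P^s)_{s\in\ct}$ with $P^{s^\con i}\subseteq P^s\cap\ccc_{t_{s^\con i}}$ and $[P^s]^{g(k)}\subseteq\fff$. Your explicit justification of the relativization (via Mycielski, Blass, and the already-relativized Lemma \ref{l6}) is a point the paper leaves implicit, so no gap here.
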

\begin{proof}
By Theorem \ref{t2} and our assumptions, we have that for every
$P\subseteq \ccc$ perfect and every $m\geq 1$ there exists
$Q\subseteq P$ perfect with $[Q]^m\subseteq\fff$. Hence, arguing
as in Lemma \ref{l1}, we may construct a regular dyadic subtree
$T=(t_s)_{s\in\ct}$ and a family $(P^s)_{s\in\ct}$ of perfect
subsets of $P$ such that $t_\varnothing=\varnothing$ and moreover
the following hold.
\begin{enumerate}
\item[(i)] For every $k\in\omega$, every $s\in 2^k$ and every
$i\in\{0,1\}$, $P^{s^\con i}\subseteq P^s\cap \ccc_{t_{s^\con
i}}$. \item[(ii)] For every $k\in\omega$ and every $s\in 2^k$,
$[P^s]^{g(k)}\subseteq\fff$.
\end{enumerate}
Clearly $T$ is as desired.
\end{proof}
We need to introduce some more terminology. Let
$f:\omega\to\omega$ be such that $n\geq f(n)>0$ for all $n\geq 1$.
Let also $\fff\subseteq [\ccc]^{<\omega}$ and $A\subseteq \ccc$.
We say that $\fff$ is $f$-\textit{filling} over $A$ if for every
$n\geq 1$ and every $F\subseteq A$ with $|F|=n$ there exists
$G\subseteq F$ with $G\in\fff$ and $|G|\geq f(n)$. We notice that
if $\fff\subseteq [\ccc]^{<\omega}$ is an arbitrary hereditary
family with $\lim d_\fff(n)=+\infty$, then for every $A\subseteq
\ccc$ infinite there exists $B\subseteq A$ countable such that
$\fff$ becomes $1/2$-filling over $B$ (this follows by an
application of Ramsey's theorem). Although, by Theorem \ref{t1},
this fact cannot be extended to perfect sets, it can be extended
for weaker versions of density as the following corollary
demonstrates.
\begin{cor} \label{c3} Let
$\fff\subseteq [\ccc]^{<\omega}$ be as in Proposition \ref{p1} and
$f:\omega\to\omega$ be such that $n\geq f(n)>0$ for all $n\geq 1$
and $\lim \frac{f(n)}{n}=0$. Then for every $P\subseteq\ccc$
perfect there exists $Q\subseteq P$ perfect such that $\fff$ is
$f$-filling over $Q$.
\end{cor}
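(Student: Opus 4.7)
The plan is to invoke Proposition \ref{p1} with a function $g:\omega\to\omega$ carefully calibrated to $f$, and to take $Q=[\hat{T}]$ for the resulting tree. The engine of the argument is a pigeonhole at an appropriate level of $T$: given $F\subseteq[\hat{T}]$ with $|F|=n$, partitioning $F$ among the $2^k$ cylinders $\{[\hat{T}]_{t_s}:s\in 2^k\}$ yields some $s$ with $|F\cap[\hat{T}]_{t_s}|\geq\lceil n/2^k\rceil$, and I want this to dominate $f(n)$. Setting $k(n)=\lfloor\log_2(n/f(n))\rfloor$ does exactly this: one has $k(n)\geq 0$ because $f(n)\leq n$, the inequality $2^{k(n)}\leq n/f(n)$ gives $\lceil n/2^{k(n)}\rceil\geq f(n)$, and $k(n)\to\infty$ because $\lim f(n)/n=0$.

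Next, I need to arrange for Proposition \ref{p1} to certify membership in $\fff$ for subsets of size $f(n)$ at level $k(n)$. Concretely, I will define
\[ g(k)=\max\bigl(\{1\}\cup\{f(n):k(n)=k\}\bigr), \]
so that $g(k(n))\geq f(n)$ by construction. The crucial point, which I view as the heart of the argument, is that this maximum is finite for every $k$: since $k(n)\to\infty$, the set $\{n:k(n)=k\}$ is finite, hence $g$ is a well-defined function $\omega\to\omega$. Applying Proposition \ref{p1} to this $g$ and to $P$ then produces a regular dyadic tree $T=(t_s)_{s\in\ct}$ with $[\hat{T}]\subseteq P$ such that every $G\subseteq[\hat{T}]_{t_s}$ of cardinality at most $g(k)$ lies in $\fff$, for every $k\in\omega$ and $s\in 2^k$.

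Finally, I set $Q=[\hat{T}]$, which is a perfect subset of $P$, and verify that $\fff$ is $f$-filling over $Q$: given $F\subseteq Q$ with $|F|=n\geq 1$, the pigeonhole at level $k(n)$ produces $s\in 2^{k(n)}$ with $|F\cap[\hat{T}]_{t_s}|\geq f(n)$, and any $G\subseteq F\cap[\hat{T}]_{t_s}$ of cardinality exactly $f(n)$ then satisfies $|G|\leq g(k(n))$, hence $G\in\fff$. The main obstacle is really just the calibration of $g$: the level $k(n)$ must be small enough for pigeonhole to leave a piece of size at least $f(n)$ (forcing $k(n)$ to be of order $\log_2(n/f(n))$), yet large enough that $g(k(n))$ depends on only finitely many values of $n$; the hypothesis $\lim f(n)/n=0$ is precisely what makes both demands simultaneously satisfiable.
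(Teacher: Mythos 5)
Your proof is correct and follows essentially the same route as the paper: both apply Proposition \ref{p1} with a function $g$ calibrated to $f$, take $Q=[\hat{T}]$, and verify $f$-filling by pigeonholing $F$ over the level-$k$ cylinders $[\hat{T}]_{t_s}$ at a level $k$ chosen so that $\lceil n/2^k\rceil\geq f(n)$ while $g(k)$ stays finite. The only difference is bookkeeping: the paper picks a sequence $(n_k)$ with $\sup_{i\geq n_k}f(i)/i\leq 2^{-k}$ and sets $g(k)=\lceil n_{k+1}/2^k\rceil$ (as in Example \ref{ex1}), whereas you use the explicit level $k(n)=\lfloor\log_2(n/f(n))\rfloor$ and define $g(k)$ as a maximum over the finitely many $n$ with $k(n)=k$.
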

\begin{proof}
We may select a strictly increasing sequence $(n_k)_{k\in\omega}$
such that $n_0=1$ and $\sup_{i\geq n_k}\frac{f(i)}{i}\leq
\frac{1}{2^k}$ for all $k\in\omega$. We define $g:\omega\to\omega$
by $g(k)=\big\lceil n_{k+1}/2^k \big\rceil$. Let
$T=(t_s)_{s\in\ct}$ be the regular dyadic subtree obtained by
Proposition \ref{p1} for the function $g$ and the given perfect
set $P$. Setting $Q=[\hat{T}]$ and arguing as in Example
\ref{ex1}, we can easily verify that $Q$ has all the desired
properties.
\end{proof}


\section{Connections with Banach spaces}

Theorem \ref{t1} has some Banach space theoretic implications
which we are about to describe. Let $\fff\subseteq
[\ccc]^{<\omega}$ hereditary with $[\ccc]^1\subseteq\fff$. For
every such family $\fff$ we define a Banach space $X_\fff$ as
follows. Let $c_{00}(\ccc)$ be the vector space of all real-valued
functions on $\ccc$ with finite support and denote by
$(e_x)_{x\in\ccc}$ the standard Hamel basis of $c_{00}(\ccc)$.
Then $X_\fff$ is the completion of $c_{00}(\ccc)$ under the norm
$\|\cdot\|_\fff$ defined by
\[ \Big\| \sum_{i=0}^n a_i e_{x_i} \Big\|_{\fff} =\sup \Big\{
\sum_{i\in F} |a_i|: \{ x_i:i\in F\}\in\fff \Big\}. \]  We recall
that a bounded sequence $(e_n)_n$ in a Banach space $E$ is called
\textit{Cesaro summable} if the sequence of averages
$\frac{e_0+...+e_{n-1}}{n}$ converges in norm. Under the above
terminology we have the following proposition.
\begin{prop}
\label{p2} Let $\fff\subseteq [\ccc]^{<\omega}$ be hereditary,
compact and such that $[\ccc]^1\subseteq\fff$. Assume that $\fff$
is $C$-measurable and $\lim d_\fff(n)=+\infty$. Then the following
hold.
\begin{enumerate}
\item[(1)] For every sequence $(x_i)_i$ in $\ccc$ there exists
$L\subseteq\omega$ infinite such that for every $N\subseteq L$
infinite the sequence $(e_{x_i})_{i\in N}$ is not Cesaro summable
in $X_\fff$.
\end{enumerate}
But on the other hand,
\begin{enumerate}
\item[(2)] for every $P\subseteq\ccc$ perfect, there exists
$(x_i)_i$ in $P$ such that the sequence $(e_{x_i})_i$ is Cesaro
summable in $X_\fff$.
\end{enumerate}
\end{prop}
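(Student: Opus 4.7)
The proposition combines a lower bound and an upper bound on the Cesaro behavior in $X_\fff$, and the two parts exploit different aspects of the hypotheses: part~(1) uses $\lim d_\fff(n)=+\infty$ to force many coordinates to realise the norm on any infinite sub-sequence, whereas part~(2) uses compactness of $\fff$ to exhibit a subsequence where $\fff$ is uniformly bounded in cardinality, making the averages vanish in norm.

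For part~(1), after passing to a subsequence I may assume the $x_i$ are pairwise distinct. The remark preceding Corollary~\ref{c3} (a standard Ramsey argument using only heredity of $\fff$ and $\lim d_\fff(n)=+\infty$) yields an infinite $L\subseteq\omega$ such that $\fff$ is $1/2$-filling over $\{x_i:i\in L\}$. Because $\fff$ is hereditary, the $1/2$-filling property descends to $\{x_i:i\in N\}$ for every infinite $N\subseteq L$. Fix such an $N$, enumerate it as $j_0<j_1<\dots$, and set $A_n=\tfrac{1}{n}\sum_{k=0}^{n-1} e_{x_{j_k}}$. For each $n$ I apply the $1/2$-filling property to the finite set $\{x_{j_n},\dots,x_{j_{2n-1}}\}$ to produce $G\in\fff$ with $G\subseteq\{x_{j_n},\dots,x_{j_{2n-1}}\}$ and $|G|\geq n/2$. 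The support functional $\phi_G\bigl(\sum a_i e_{x_{j_i}}\bigr)=\sum_{i\,:\,x_{j_i}\in G} a_i$ has norm at most $1$ on $X_\fff$ (directly from the definition of $\|\cdot\|_\fff$ together with heredity), vanishes on $A_n$, and takes value $|G|/(2n)\geq 1/4$ on $A_{2n}$. Hence $\|A_{2n}-A_n\|_\fff\geq 1/4$, so $(A_n)$ is not Cauchy, and $(e_{x_i})_{i\in N}$ is not Cesaro summable.

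For part~(2), fix a perfect $P\subseteq\ccc$ and pick any countable infinite $A\subseteq P$. Applying Ramsey's theorem diagonally (at stage $k$ pass to an infinite refinement along which $[\,\cdot\,]^k\cap\fff$ is either empty or full, then fuse), I obtain an infinite $B\subseteq A$ such that $[B]^k$ is $\fff$-monochromatic for every $k\geq 1$. Heredity of $\fff$ ensures that the property ``$[B]^k\subseteq\fff$'' propagates downward in $k$, so if it held for every $k$ we would have $[B]^{<\omega}\subseteq\fff$, contradicting the compactness of $\fff$. Therefore there exists $k_0$ with $[B]^k\cap\fff=\emptyset$ for all $k\geq k_0$, equivalently $\fff\cap[B]^{<\omega}\subseteq[B]^{<k_0}$. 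Enumerating $B$ as a sequence $(x_i)_i$ of distinct points, every $G\subseteq\{x_0,\dots,x_{n-1}\}$ in $\fff$ satisfies $|G|<k_0$, so $\|A_n\|_\fff\leq(k_0-1)/n\to 0$ and $(e_{x_i})$ is Cesaro summable to $0$.

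There is no serious obstacle beyond the bookkeeping: the Ramsey diagonalisation in part~(2) is the standard one, and the norm computation in part~(1) is immediate from the definition of $\|\cdot\|_\fff$ once one uses that distinctness of the $x_{j_i}$ makes $\phi_G$ see exactly the indices $i$ with $x_{j_i}\in G$. The point worth highlighting is that the weaker hypothesis $\lim d_\fff(n)=+\infty$ (rather than the stronger $(\ast)$) is enough for the Ramsey extraction in part~(1) but does not suffice to invoke Proposition~\ref{p1} or Corollary~\ref{c3}; compactness of $\fff$ is precisely what supplies the bound in part~(2) that the Galvin-type results cannot produce in this regime.
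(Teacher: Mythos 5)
Your part (1) is correct and is essentially the paper's argument: extract $L$ so that $\fff$ is $1/2$-filling over $\{x_i:i\in L\}$ via the Ramsey remark preceding Corollary \ref{c3}, then show the averages are not Cauchy; your functional $\phi_G$ supported on the second half of an initial segment makes explicit a step the paper leaves to the reader, and it is sound. (Both you and the paper tacitly assume $(x_i)$ has infinitely many distinct terms; otherwise a constant subsequence is Cesaro summable and the statement must be read accordingly.)

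Part (2), however, has a genuine gap, and it is precisely the step you describe as routine bookkeeping. First, the diagonal Ramsey fusion does not produce an infinite $B$ with $[B]^k$ monochromatic for \emph{every} $k$ simultaneously; it only gives, for each $k$, a tail of $B$ whose $k$-subsets are monochromatic, and when the homogeneous colour is ``in $\fff$'' for every $k$ on shrinking tails one cannot assemble a single infinite set with $[B]^{<\omega}\subseteq\fff$. The Schreier family $\{F:|F|\leq \min F+1\}$, transported to a countable subset of $\ccc$, is compact and hereditary, yet for every $k$ all $k$-subsets of the tail $\{k-1,k,k+1,\dots\}$ belong to it. Second, and decisively, your conclusion that there exists $k_0$ with $[B]^{k}\cap\fff=\varnothing$ for all $k\geq k_0$ is incompatible with the hypothesis $\lim d_\fff(n)=+\infty$, which you need for part (1): once $d_\fff(n)\geq k_0$, \emph{every} $n$-element subset of $\ccc$ --- in particular every $n$-element subset of your $B$ --- contains a member of $\fff$ of size at least $k_0$, so by heredity $[B]^{k_0}\cap\fff\neq\varnothing$ for every infinite $B$ and every $k_0$. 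Hence no infinite set has a bounded $\fff$-trace and the averages cannot be controlled by a uniform cardinality bound. The tell is that your part (2) never uses $C$-measurability, whereas the paper's proof needs it: Lemma \ref{l1} (via the Galvin property) produces a regular dyadic tree $T$ deciding for $\fff$, the sequence $(x_i)$ is chosen so that its first $2^n$ terms occupy distinct level-$n$ cylinders of $T$, and then $M_n=\max\{|F|:F\subseteq 2^n \text{ trapped in } \fff\}$ satisfies $M_n/2^n\to 0$ --- not because $M_n$ is bounded, but because otherwise the limit argument of Lemma \ref{l3} would yield a perfect $R$ with $[R]^{<\omega}\subseteq\fff$, contradicting compactness. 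The trace along the chosen sequence grows unboundedly but sublinearly; that is the phenomenon your argument misses.
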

\begin{proof}
(1) Let $(x_i)_i$ be a sequence in $\ccc$. As we have already
remarked, by the fact that $\fff$ is hereditary and $\lim
d_\fff(n)=+\infty$, there exists $L\subseteq \omega$ infinite such
that $\fff$ is $1/2$-filling over $\{ x_i:i\in L\}$. By the
definition of the norm of $X_\fff$, we see that for every
$F\subseteq L$ finite we have $\big\| \sum_{i\in F}
e_{x_i}\big\|_{\fff} \geq \frac{|F|}{2}$. This clearly implies
that for every $N\subseteq L$ infinite the sequence
$(e_{x_i})_{i\in N}$ is not Cesaro summable in $X_\fff$.\\
(2) Let $P\subseteq\ccc$ perfect. By our assumptions, Lemma
\ref{l1} can be applied. Hence, there exists a regular dyadic
subtree $T=(t_s)_{s\in\ct}$ that decides for $\fff$ and
$[\hat{T}]\subseteq P$.

Let $Z$ be the set of all eventually zero sequences in $\ccc$. We
enumerate $Z$ as $(z_i)_{i\in\omega}$ as follows. For every
$i\in\omega$ let $z_i$ be the unique element of $Z$ satisfying
$i=\sum_{k\in\omega} z_i(k) 2^k$. By the uniqueness of the dyadic
representation of every natural number, we have that if $i\neq j$,
then $z_i\neq z_j$ and moreover, if $n,i,j\in\omega$ are such that
$i,j<2^n$, then $z_i|n\neq z_j|n$.

For every $i\in\omega$ define $x_i=\bigcup_{k\in\omega}
t_{z_i|k}\in [\hat{T}]$. We claim that $(x_i)_{i\in\omega}$ is the
desired sequence. To this end, for every $n\in\omega$ let $s\in
2^n$ and put $l_n=\ell(t_s)$ (as $T$ is regular dyadic $l_n$ is
well-defined and independent of the choice of $s$). By the above
mentioned property of the sequence $(z_i)_i$, for every
$i,n\in\omega$ with $i<2^n$ we have that
\begin{equation}
\label{e1} |\{ x_0|l_{n},...,x_i|l_{n}\}|=|\{x_0,...,x_i\}|=i+1.
\end{equation}
For every $n\in\omega$ define
\[ M_n=\max\big\{ |F|: F\subseteq 2^n \text{ and } F \text{ is trapped in }
\fff\big\}. \] By (\ref{e1}) and the fact that the tree $T$
decides for $\fff$, for every $i<2^n$ we get that
\[ \max \big\{ |G|: G\subseteq\{ x_0,...,x_{i}\} \text{ and } G\in\fff
\big\}\leq M_{n}. \] Let $i,n\geq 1$ with $2^{n-1}\leq i<2^n$.
Then
\begin{equation}
\label{e2} \Big\| \frac{1}{i+1}\sum_{k=0}^{i} e_{x_k}
\Big\|_{\fff} \leq \frac{M_{n}}{i+1}\leq \frac{M_{n}}{2^{n-1}}=
2\frac{M_{n}}{2^{n}}.
\end{equation}
Finally notice that
\[ \lim \frac{M_n}{2^n}=0. \]
For if not, arguing as in the proof of Lemma \ref{l3}, we would
have that there exists $R\subseteq [\hat{T}]$ perfect with
$[R]^{<\omega}\subseteq\fff$, contradicting the fact that $\fff$
is compact. Hence, by (\ref{e2}), we have
\[ \frac{1}{i+1}\sum_{k=0}^{i} e_{x_k}\to 0\]
and the proof is completed.
\end{proof}
\begin{rem}
(a) Part (2) of Proposition \ref{p2} can also be derived by
Theorem 3A in \cite{F}, taking into account that every
$C$-measurable, hereditary and compact family $\fff$ is not
$\ee$-filling for every $\ee>0$. For completeness we have
included a proof in the present setting.\\
(b) We notice that the fact that every subsequence of the sequence
$(x_i)_{i\in L}$, obtained in part (1) of Proposition \ref{p2}, is
not Cesaro summable, is expected by the Erd\"{o}s-Magidor theorem
\cite{EM} (see also \cite{AT}). \\
(c) We notice that under the assumptions of Proposition \ref{p2},
for every $P\subseteq\ccc$ perfect there exists $Q\subseteq P$
perfect with the following property. If $(x_i)_i$ is a sequence in
$Q$ and the sequence $(e_{x_i})_i$ generates a spreading model
(see \cite{AT} for the definition), then this spreading model must
be $\ell_1$.
\end{rem}


\end{document}